\title{\LARGE \bf
Geometric Convergence of Gradient Play Algorithms \\ for Distributed Nash Equilibrium Seeking}
\author{Tatiana Tatarenko, Wei Shi and Angelia Nedi\'c
\thanks{The authors are with School of Electrical, Computer and Energy Engineering,
        Arizona State University, USA. 
        }
\thanks{A. Nedi\'c is also affiliated with Moscow Institute of Physics and Technology (MIPT), Dolgoprudny, Russia.}
\thanks{The work has been partially supported by Office of Naval Research grant no. N00014-12-1-0998.}
}
\definecolor{darkblue}{rgb}{0,0,1}
\newcommand{\spa}[1]{{\mathrm{span}\{#1\}}}
\newcommand{\nul}[1]{{\mathrm{null}\{#1\}}}
\newcommand{\pro}[2]{{\mathcal{P}_{#1}\left\{{#2}\right\}}}
\newcommand{\smax}[1]{{\sigma_{\max}\{#1\}}}
\newcommand{\lmax}[1]{{\lambda_{\max}\{#1\}}}
\newcommand{\lminnz}[1]{{\tilde{\lambda}_{\min}\{#1\}}}
\newtheorem{theorem}{Theorem}
\newtheorem{definition}{Definition}
\newtheorem{proposition}{Proposition}
\newtheorem{lemma}{Lemma}
\newtheorem{remark}{Remark}
\newtheorem{assumption}{Assumption}
\newcommand{\R}{{\mathbb{R}}}
\newcommand{\bx}{{\mathbf{x}}}
\newcommand{\bc}{{\mathbf{c}}}
\newcommand{\bn}{{\mathbf{n}}}
\newcommand{\by}{{\mathbf{y}}}
\newcommand{\Lam}{{\Lambda}}
\newcommand{\Diag}{{\mathrm{Diag}}}
\newcommand{\di}{{\mathrm{diag}}}
\newcommand{\dJi}[1]{{\nabla_{#1} J_{#1}}}
\newcommand{\sd}{{\tilde{\nabla}}}
\newcommand{\mura}{{\mu_{r,\mathbf{F}_\mathrm{a}}}}
\newcommand{\E}{{\mathcal{E}}}
\newcommand{\A}{{\mathcal{A}}}
\newcommand{\I}{{\mathcal{I}}}
\newcommand{\Gra}{{\mathcal{G}}}
\newcommand{\tx}{{\tilde{x}}}
\newcommand{\ty}{{\tilde{y}}}
\newcommand{\ind}{{\mathcal{I}}}
\newcommand{\Om}{{\Omega}}
\newcommand{\Oma}{{\Omega_\mathrm{a}}}
\newcommand{\N}{{\mathcal{N}}}
\newcommand{\bF}{{\mathbf{F}}}
\newcommand{\tbF}{{\tilde{\mathbf{F}}}}
\newcommand{\bFa}{{\mathbf{F}_\mathrm{a}}}
\newcommand{\bG}{{\mathbf{G}}}
\newcommand{\Fro}{{\mathrm{Fro}}}
\newcommand{\LF}{{L_{{\bF}}}}
\newcommand{\LLF}{{L_{{\Lam\bF}}}}
\newcommand{\muFa}{\mu_{{\bF_{\mathrm{a}}}}}
\newcommand{\muF}{\mu_{{\bF}}}
\newcommand{\LFa}{{L_{{\bF_{\mathrm{a}}}}}}
\newcommand{\trace}{{\mathrm{trace}}}
\newcommand{\T}{{\mathrm{T}}}
\newcommand{\Lap}{\textbf{{\L}}}
\begin{document}

\maketitle
\thispagestyle{empty}
\pagestyle{empty}

\begin{abstract}                          
We study distributed algorithms for seeking a Nash equilibrium in a class of non-cooperative convex games with strongly monotone mappings. Each player has access to her own smooth local cost function and can communicate to her neighbors in some undirected graph. To deal with fast distributed learning of Nash equilibria under such settings, we introduce a so called augmented game mapping and provide conditions under which this mapping is strongly monotone. We consider a distributed gradient play algorithm for determining a Nash equilibrium (GRANE). The algorithm involves every player performing a gradient step to minimize her own cost function while sharing and retrieving information locally among her neighbors in the network. Using the reformulation of the Nash equilibrium problem based on the strong monotone augmented game mapping, we prove the convergence of this algorithm to a Nash equilibrium with a geometric rate. Further, we introduce the Nesterov type acceleration for the gradient play algorithm. We 
demonstrate that, similarly to the accelerated algorithms in centralized optimization and variational inequality problems, our accelerated algorithm outperforms GRANE in the convergence rate. Moreover, to relax assumptions required to guarantee the strongly monotone augmented mapping, we analyze the restricted strongly monotone property of this mapping and prove geometric convergence of the distributed gradient play under milder assumptions.
\end{abstract}

\section{Introduction}
Game theory provides a framework to deal with optimization problems arising in multi-agent systems, where objective functions are coupled through decision variables of all agents in a system. The applications of game theoretic approach include, for example, electricity markets, power systems,  flow control problems, and communication networks \cite{Alpcan2005, BasharSG, Scutaricdma}. Nash equilibria in games characterize desirable and stable solutions to the corresponding multi-agent optimization problems. The focus of our paper is on discrete-time algorithms applicable to fast distributed Nash equilibrium seeking in a class of non-cooperative games with strongly monotone mappings.

There is a large body of work on Nash equilibrium computation in non-cooperative games. Each approach is based on information available to agents in a systems and takes into account some structural properties of agents' cost functions. For example, in the case of the convex \textit{potential game} structure, a central optimization problem can be formulated whose minimizers are Nash equilibria in the game. To compute the minima of the potential function, some distributed communication-based algorithms can be set up \cite{Li2013}. Moreover, in absence of communication, so called \emph{oracle-based} algorithms can be applied to Nash equilibrium seeking in potential games. In systems with oracle information each agent can calculate her current output given any action from her action set. Various versions of the logit dynamics have been presented to compute Nash equilibria in such setting in both discrete action \cite{MardenLLL, Tat_ACC14} and continuous action \cite{Leslie, TatTCNS} games. In some practical
situations agents do not have access to functional forms of their objectives. Rather, each agent (player) can only observe obtained payoffs and be aware of her own actions. Such information structure is usually referred to as \textit{payoff-based}. A payoff-based Nash equilibrium learning  is proposed in \cite{MardRev, COVEr} for discrete action potential games and in \cite{Frihauf2012, Stankovic2012,  TatKamTAC2018} for continuous action convex games.

Many works study convex games where players can exchange their local information with neighbors in some undirected connected communication graph.
Distributed algorithms are proposed for \textit{aggregative games} \cite{Koshal2012, paccagnan2016aggregative}. Communication protocols are applied to general convex games with some convergence guarantees \cite{salehisadaghiani2016distributed, ADMM_WS2017}. The work \cite{salehisadaghiani2016distributed} studies distributed Nash equilibrium seeking of general non-cooperative games and proposes a gradient based gossip algorithm. Under strict convexity, Lipschitz continuity and bounded gradient assumptions, this algorithm converges almost surely to the Nash equilibrium, given a diminishing step size.  Under further assumption of strong convexity, with some constant step size $\alpha$, the algorithm converges to an $O(\alpha)$ neighborhood of the Nash equilibrium in average. The work \cite{ADMM_WS2017} develops an algorithm within the framework of inexact-ADMM and proves its convergence to the Nash equilibrium with the rate $o(1/k)$ under cocoercivity of the game mapping.

All aforementioned works focus on convergence guarantees in different game settings and do not aim to provide fast distributed algorithms with a geometric convergence rate. The contributions of this paper are as follows. We provide an equivalent condition for Nash equilibrium states based on so called \emph{augmented game mapping} which takes into account agents' local information that is exchanged over some communication graph. Further, similarly to majority of research on distributed Nash equilibrium seeking, we consider undirected connected communication graphs and figure out under which assumptions the augmented game mapping is strongly monotone and Lipschitz continuous. Next, we present a distributed gradient play algorithm for determining a Nash equilibrium (GRANE). We demonstrate that GRANE converges to the Nash equilibrium with a geometric rate, given strongly monotone and Lipschitz continuous augmented game mapping. Moreover, using the results for strongly monotone variational inequalities presented 
in \cite{Nesterov}, we introduce the Nesterov type acceleration for the gradient play algorithm. We show that the corresponding accelerated gradient play algorithm (Acc-GRANE) outperforms GRANE in the convergence rate. However, the assumptions under which the augmented game mapping is strongly monotone restrict the class of games where Acc-GRANE can be applied. To rectify this issue we consider a relaxed assumption implying the restricted strongly monotone game mapping. Although Acc-GRANE is not applicable in this case, we demonstrate that GRANE converges to the Nash equilibrium with a geometric rate under this milder assumption. A preliminary and brief version of our work will appear in Conference 
on Decision and Control 2018 \cite{Cdc2018_TatShiNed}. In the current paper, we extend our past work in the following ways. Firstly, we provide all proofs omitted in 
the conference paper. Secondly, we relax the assumption for the augmented mapping to be able to consider a broader class of games 
where geometric convergence to a Nash equivilibrium takes place. Finally, we discuss how communication topology affects the convergence 
rate of the distributed algorithms.

This paper is organized as follows. In Section \ref{sec:problem},  we set up the game under consideration. In Section \ref{sec:eqcond}, we define the augmented game mapping, analyze its properties, and provide an equivalent condition for Nash equilibrium states based on this mapping. Section \ref{sec:algorithms} develops the algorithms with geometric convergence to a Nash equilibrium. We provide a numerical case study in Section \ref{sec:sim}. In Section \ref{sec:conclusion}, we summarize the result and discuss future work.

\textbf{Notations.}
The set $\{1,\ldots,n\}$ is denoted by $[n]$.
For any function $f:K\to\R$, $K\subseteq\R^n$, $\nabla_i f(x) = \frac{\partial f(x)}{\partial x_i}$ is the partial derivative taken in respect to the $i$th coordinate of the vector variable $x\in\R^n$.
For any real vector space $\tilde E$ its dual space is denoted by $\tilde E^*$ and the inner product is denoted by $\langle u,v \rangle$, $u\in\tilde E^*$, $v\in \tilde E$. Some operator $B:\tilde E\to\tilde E^*$ is positive definite if $\langle Bv,v \rangle>0$ for all $v\in\tilde E\setminus \{0\}$. Some operator $B:\tilde E\to\tilde E^*$ is self-adjoint if $\langle Bv,v' \rangle = \langle Bv',v \rangle$ for all $v',v\in\tilde E$. 
Given a positive definite and self-adjoint operator $B$, we define the Euclidean norm on $\tilde E$ as $\|v\| = \langle Bv,v \rangle^{1/2}$.
Any mapping $g:\tilde E\to \tilde E^*$ is said to be \emph{strongly monotone with the constant} $\mu>0$ on $Q\subseteq \tilde E$, if $\langle g(u)-g(v), u - v \rangle\ge\mu\|u - v\|^2$ for any $u,v\in Q$, where $\|\cdot\|$ is the corresponding norm in $\tilde E$.
We consider real vector space $E$, which is either space of real vectors $E = E^* = \R^n$ or the space of real matrices $E = E^* = \R^{n\times n}$. In the case $E = \R^{n\times n}$ the inner product $\langle u,v \rangle \triangleq \sqrt{\trace(u^Tv)}$ is the Frobenius inner product on $\R^{n\times n}$.
In the case $E = \R^n$ we use $\|\cdot\|_2$ to denote the Euclidean norm induced by the standard dot product in $\R^n$, whereas  in the case $E = \R^{n\times n}$ we use $\|\cdot\|_{\Fro}$ to denote the Frobenius norm induced by the Frobenius inner product i.e. $\|v\|_{\Fro} \triangleq \sqrt{\trace(v^Tv)}$. 
We use $\pro{\Om}{v}$ to denote the projection of $v\in E$ to a set $\Om\subseteq E$.
The largest singular value of a matrix $A$ is denoted by
$\smax{A}$. The smallest \emph{nonzero} eigenvalue of a positive semidefinite matrix $A\not=0$ is denoted by
$\lminnz{A}$, which is strictly positive. For a matrix $A\in\R^{m\times n}$,
 $\nul{A}\triangleq\left\{x\in\R^n\big|Ax=0\right\}$ is the null space of
 $A$ and $\spa{A}\triangleq \left\{y\in\R^m\big|y=Ax,\forall
x\in\R^n\right\}$ is the linear span of all the columns of $A$.
For any matrix $A\in\R^{n\times n}$ we use $\di(A)$ to denote its diagonal vector, i.e. $\di(A) = (a_{11},\ldots, a_{nn})$.
For any vector $a\in\R^n$ we  use $\Diag(a)$ to denote the diagonal matrix with the vector $a$ on its diagonal.
We call a matrix $A$ \emph{consensual}, if it has equal row vectors.
Assuming that $\Om\subseteq\R^n$,  we define the indicator function $\I_{\Om}(x)$ of the set $\Om$ such that $\I_{\Om}(x)=0$, if $x\in\Om$, and $\I_{\Om}(x)=+\infty$, otherwise.
%
%
%

\section{Nash Equilibrium Seeking}\label{sec:problem}
\subsection{Problem Formulation}
We consider a non-cooperative game between $n$ players. Let $J_i$ and $\Om_i\subseteq \R$\footnote{All results below are applicable for games with different dimensions $\{d_i\}$ of the action sets $\{\Om_i\}$. The one-dimensional case is considered for the sake of notation simplicity.} denote respectively the cost function and the feasible action set of the player $i$. We denote the joint action set by $\Om = \Om_1\times\ldots\times\Om_n$. Each function $J_i(x_i,x_{-i})$, $i\in[n]$, depends on $x_i$ and $x_{-i}$, where $x_i\in\Om_i$ is the action of the player $i$ and $x_{-i}\in\Om_{-i}=\Om_1\times\ldots\times\Om_{i-1}\times\Om_{i+1}\times\Om_n$ denotes the joint action of all players except for the player $i$. We assume that the players can interact over an undirected communication graph $\Gra([n],\A)$. The set of nodes is the set of the player $[n]$ and the set of undirected arcs $\A$ is such that $(i,j)\in\A$ if and only if $(j,i)\in\A$, i.e. there is an undirected communication link between $i$ to $j$. 
Thus, some information (message) can be passed from the player $i$ to the player $j$ and vice versa. For each player $i$ the set $\N_i$ is the set of neighbors in the graph $\Gra([n],\A)$, namely $\N_{i}\triangleq\{j\in[n]: \, (i,j)\in\A\}$.
Let us denote the game introduced above by $\Gamma(n,\{J_i\},\{\Om_i\},\Gra)$.
We make the following assumptions regarding the game $\Gamma$.

\begin{assumption}\label{assum:convex}
 The game under consideration is \emph{convex}. Namely, for all $i\in[n]$, the set $\Om_i$ is convex and closed, the cost function $J_i(x_i, x_{-i})$ is convex in $x_i$ and continuously differentiable in $x_i$ for each fixed $x_{-i}$.
\end{assumption}

Under Assumption~\ref{assum:convex}, we can define the game mapping.
\begin{definition}\label{def:gamemapping} The \emph{game mapping} $\bF(x):\Om\to\R^n$ is defined as follows:
 \begin{align}\label{eq:gamemapping}
&\bF(x)\triangleq\left[\nabla_1 J_1(x_1,x_{-1}), \ldots, \nabla_n J_n(x_n,x_{-n})\right]^T.
 \end{align}
\end{definition}

\begin{assumption}\label{assum:str_monotone}
 The game mapping $\bF(x)$ is defined on the whole space $\R^n$ and is \emph{strongly monotone} on $\R^n$ with the constant $\mu_{\bF}$.
\end{assumption}
\begin{remark}\label{rem:str_mon}
  Note that Assumption~\ref{assum:str_monotone} above implies strongly convexity of each cost function $J_i(x_i, x_{-i})$ in $x_i$ for any fixed $x_{-i}$ with the constant $\mu_{\bF}$. Thus, the part of Assumption~\ref{assum:convex} holds. However, we consider Assumptions~\ref{assum:convex} and \ref{assum:str_monotone} separately, to be able to emphasize what conditions are required in each of the results presented further in the paper.
\end{remark}

\begin{assumption}\label{assum:Lipschitz}
For every $i\in[n]$ the function $\dJi{i}(x_i,x_{-i})$ is Lipschitz continuous in
$x_i$ on $\Om_i$ for every fixed $x_{-i}\in\R^{n-1}$, that is, given any $x_{-i}\in\R^{n-1}$, for some constant $L_i\ge 0$, we have $\forall\ x_i, y_i\in\Om_i$
\begin{align*}
 |\dJi{i}(x_i,x_{-i})-\dJi{i}(y_i,x_{-i})|&\leq L_i|x_i-y_i|.
\end{align*}
Moreover, for every $i\in[n]$ the function $\dJi{i}(x_i,x_{-i})$ is Lipschitz continuous in $x_{-i}$ on $\R^{n-1}$, for every fixed $x_i\in\Om_i$, that is, given any $x_{i}\in\Om_i$, for some constant $L_{-i}\ge 0$, we have
\begin{align*}
|\dJi{i}(x_i,x_{-i})-\dJi{i}(x_i,y_{-i})|&\leq L_{-i}\|x_{-i}-y_{-i}\|_2,\cr
 \forall\ x_{-i},& y_{-i}\in\R^{n-1}.
\end{align*}
\end{assumption}

Finally, we make the following assumption on the communication graph, which guarantees sufficient information  "mixing" in the network.
\begin{assumption}\label{assum:connected}
The underlying undirected communication graph $\Gra([n],\A)$ is connected. The associated symmetric non-negative mixing matrix $W=[w_{ij}]\in\R^{n\times n}$ defines the weights on the undirected arcs such that $w_{ij}>0$ if and only if $(i,j)\in\A$ and $\sum_{i=1}^{n}w_{ij} = 1$, $\forall j\in[n]$.
\end{assumption}
\begin{remark}
  Note that there are some simple strategies for generating symmetric mixing matrices for which Assumption~\ref{assum:connected} holds (see Section 2.4 in \cite{Shi2014} for summarized strategies).
\end{remark}
Assumption~\ref{assum:connected} implies the following properties of the mixing matrix $W$ (see \cite{brouwer12}):
\begin{subequations}
\label{eq:Wprop}
\begin{align}
   &\sum_{i=1}^{n}w_{ij} = \sum_{j=1}^{n}w_{ij} = 1,\ \forall i,j\in[n] \label{eq:dstoch}\\
   &W \mbox{ is symmetric, }\, I-W\succcurlyeq0; \label{eq:norm}\\
   & \nul{I-W}=\spa{\mathbf{1}}.\label{eq:null}
\end{align}
\end{subequations}

One of the stable solutions in any game $\Gamma$ corresponds to a Nash equilibrium defined below.
\begin{definition}\label{def:NE}
 A vector $x^*=[x_1^*,x_2^*,\cdots, x_n^*]^T\in\Om$ is a \emph{Nash equilibrium} if for any $i\in[n]$ and $x_i\in \Om_i$
 $$J_i(x_i^*,x_{-i}^*)\le J_i(x_{i},x_{-i}^*).$$
 \end{definition}
In this work, we are interested in \emph{distributed seeking of a Nash equilibrum} in a game $\Gamma(n,\{J_i\},\{\Om_i\},\Gra)$ for which Assumptions~\ref{assum:convex}-\ref{assum:connected} hold.

\subsection{Existence and Uniqueness of the Nash Equilibrium}
In this subsection, we demonstrate the existence and uniqueness of the Nash equilibrium for $\Gamma(n,\{J_i\},\{\Om_i\},\Gra)$ under Assumptions~\ref{assum:convex} and \ref{assum:str_monotone}. For this purpose we recall the results connecting Nash equilibria and solutions of variational inequalities \cite{FaccPang1}.

\begin{definition}\label{def:VI}
For a set $Q \subseteq \R^d$ and a mapping $g$: $Q \to \R^d$ the \emph{variational inequality} problem $VI(Q,g)$ is formulated as follows:
\[\mbox{Find }q^* \in Q: \quad \langle g(q^*), q-q^*\rangle \ge 0 \mbox{ for all $q \in Q$}.\] 
\emph{The set of solutions} to the variational inequality problem $VI(Q,g)$ is denoted by $SOL(Q,g)$. 
\end{definition}

The following theorem is the well-known result on the connection between Nash equilibria in games and solutions of a definite variational inequality (see Corollary 1.4.2 in \cite{FaccPang1}).

\begin{theorem}\label{th:VINE}
 Consider a non-cooperative game $\Gamma$. Suppose that the action sets of the players $\{\Om_i\}$ are closed and convex, the cost functions $\{J_i(x_i,x_{-i})\}$ are continuously differentiable and convex in $x_i$ for every fixed $x_{-i}$ on the interior of the joint action set $\Om$. Then, a vector $x^*\in \Om$ is a Nash equilibrium for the $\Gamma$ if and only if $x^*\in SOL(\Om,\bF)$, where $\bF$ is the game mapping defined by \eqref{eq:gamemapping}.
\end{theorem}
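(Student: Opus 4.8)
The plan is to prove both implications directly from the first-order optimality condition for minimization of a convex differentiable function over a convex set, exploiting the Cartesian-product structure $\Om=\Om_1\times\cdots\times\Om_n$.

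For the ``only if'' direction, I would fix a Nash equilibrium $x^*$. By Definition~\ref{def:NE}, for each $i\in[n]$ the point $x_i^*$ minimizes the convex, continuously differentiable function $x_i\mapsto J_i(x_i,x_{-i}^*)$ over the closed convex set $\Om_i$. The standard variational characterization of such a minimizer yields $\langle \dJi{i}(x_i^*,x_{-i}^*),\, x_i-x_i^*\rangle\ge 0$ for all $x_i\in\Om_i$. Summing these $n$ scalar inequalities and recalling the definition \eqref{eq:gamemapping} of $\bF$, I obtain $\langle \bF(x^*),\, x-x^*\rangle\ge 0$ for every $x=(x_1,\dots,x_n)\in\Om$, which is exactly $x^*\in SOL(\Om,\bF)$.

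For the ``if'' direction, I would start from $x^*\in SOL(\Om,\bF)$, fix an arbitrary player $i\in[n]$ and an arbitrary $x_i\in\Om_i$, and test the variational inequality against the feasible point $\tilde x=(x_1^*,\dots,x_{i-1}^*,x_i,x_{i+1}^*,\dots,x_n^*)\in\Om$, which differs from $x^*$ only in the $i$th coordinate. Since $\tilde x-x^*$ has a single nonzero component, $\langle \bF(x^*),\,\tilde x-x^*\rangle\ge 0$ collapses to $\langle \dJi{i}(x_i^*,x_{-i}^*),\, x_i-x_i^*\rangle\ge 0$. Convexity of $J_i(\cdot,x_{-i}^*)$ then gives $J_i(x_i,x_{-i}^*)\ge J_i(x_i^*,x_{-i}^*)+\langle \dJi{i}(x_i^*,x_{-i}^*),\, x_i-x_i^*\rangle\ge J_i(x_i^*,x_{-i}^*)$; since $i$ and $x_i$ were arbitrary, $x^*$ is a Nash equilibrium.

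The argument is essentially routine; the only point needing care is the use of the product structure of $\Om$ — it is precisely what makes $\tilde x$ feasible in the ``if'' direction and what lets the summed coordinatewise inequalities be tested against every $x\in\Om$ in the ``only if'' direction. (With a coupled joint constraint set this equivalence would fail, and one would instead be led to the notion of a generalized/variational equilibrium.) I would also remark that differentiability is used only to write the first-order conditions, while convexity of each $J_i$ in $x_i$ is what makes those conditions simultaneously necessary and sufficient for the per-player minimization.
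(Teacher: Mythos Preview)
Your argument is correct and is essentially the standard proof of this equivalence (the one found, e.g., in Facchinei--Pang, Corollary~1.4.2). Note, however, that the paper does not supply its own proof of Theorem~\ref{th:VINE}: it simply cites \cite{FaccPang1} for the result. So there is nothing to compare against beyond observing that your write-up matches the textbook derivation the paper is invoking.
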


The following result holds for variational inequalities with strongly monotone mappings (see Proposition 2.3.3 in \cite{FaccPang1}).
\begin{theorem}\label{th:existVI1}
 Given the $VI(Q,g)$, suppose that $Q$ is a closed convex set and the mapping $g$ is continuous and strongly monotone.  Then, the solution set $SOL(Q,g)$ is nonempty and is a singleton.
\end{theorem}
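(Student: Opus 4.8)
The plan is to prove Theorem~\ref{th:existVI1} --- the existence and uniqueness of the solution of $VI(Q,g)$ for a closed convex set $Q$ and a continuous strongly monotone mapping $g$ --- by combining a coercivity argument for existence with a direct contradiction argument for uniqueness. I would first dispose of uniqueness, since it is the shorter part. Suppose $q_1^*,q_2^*\in SOL(Q,g)$. By definition of the variational inequality, testing the inequality at $q_1^*$ with $q=q_2^*$ gives $\langle g(q_1^*),q_2^*-q_1^*\rangle\ge 0$, and testing at $q_2^*$ with $q=q_1^*$ gives $\langle g(q_2^*),q_1^*-q_2^*\rangle\ge 0$. Adding these two inequalities yields $\langle g(q_1^*)-g(q_2^*),q_1^*-q_2^*\rangle\le 0$. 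On the other hand, strong monotonicity with constant $\mu>0$ gives $\langle g(q_1^*)-g(q_2^*),q_1^*-q_2^*\rangle\ge\mu\|q_1^*-q_2^*\|^2$. Combining the two bounds forces $\|q_1^*-q_2^*\|^2\le 0$, hence $q_1^*=q_2^*$, so the solution set is a singleton whenever it is nonempty.

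For existence, I would first reduce to the case where $Q$ is bounded via a coercivity/truncation argument. Fix any reference point $q_0\in Q$. Strong monotonicity gives, for every $q\in Q$,
\[
\langle g(q),q-q_0\rangle \ge \langle g(q_0),q-q_0\rangle + \mu\|q-q_0\|^2 \ge -\|g(q_0)\|\,\|q-q_0\| + \mu\|q-q_0\|^2,
\]
using Cauchy--Schwarz. Hence $\langle g(q),q-q_0\rangle>0$ as soon as $\|q-q_0\|>\|g(q_0)\|/\mu =: R$. Now intersect $Q$ with the closed ball $\bar B(q_0,R')$ for some $R'>R$ to get a nonempty compact convex set $Q_R$. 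On $Q_R$, existence of a solution $q^*\in SOL(Q_R,g)$ follows from the standard compact-convex variational inequality existence theorem (a consequence of Brouwer's fixed-point theorem applied to the map $q\mapsto \pro{Q_R}{q-g(q)}$, whose continuity uses continuity of $g$ and nonexpansiveness of the projection onto a convex set). It then remains to upgrade this local solution to a solution of $VI(Q,g)$.

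The key step --- and the main technical point to get right --- is the promotion argument showing $q^*\in SOL(Q,g)$. Because $\|q^*-q_0\|$ lies strictly inside the ball (by the coercivity estimate the solution cannot sit on the sphere of radius $R'>R$ while also being a VI solution on $Q_R$, or alternatively one argues directly), for any $q\in Q$ the point $q_t := q^*+t(q-q^*)$ lies in $Q_R$ for all sufficiently small $t>0$ by convexity of $Q$ and the interiority of $q^*$ in the ball. Applying the $VI(Q_R,g)$ inequality at $q^*$ with test point $q_t$ gives $t\langle g(q^*),q-q^*\rangle\ge 0$; dividing by $t>0$ yields $\langle g(q^*),q-q^*\rangle\ge 0$ for all $q\in Q$, i.e. $q^*\in SOL(Q,g)$. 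I expect the delicate bookkeeping to be exactly this interiority claim --- verifying that the solution on the truncated set does not lie on the artificial boundary $\|q-q_0\|=R'$ --- which is where the strict inequality in the coercivity bound is used: if $q^*$ were on that sphere, testing the $VI(Q_R,g)$ inequality against $q_0\in Q_R$ would give $\langle g(q^*),q_0-q^*\rangle\ge 0$, contradicting $\langle g(q^*),q^*-q_0\rangle>0$ from the coercivity estimate. With nonemptiness established, the uniqueness argument above finishes the proof that $SOL(Q,g)$ is a singleton.
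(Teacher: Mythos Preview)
Your proof is correct, but note that the paper does not actually prove this theorem: it simply cites it as Proposition~2.3.3 of Facchinei--Pang~\cite{FaccPang1} and moves on. So there is no ``paper's own proof'' to compare against.

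That said, the argument you give is exactly the standard one (and essentially the one in Facchinei--Pang): uniqueness via the two-point test combined with strong monotonicity, and existence via coercivity, truncation to a compact convex set, Brouwer/Hartman--Stampacchia on the truncated problem, and then promotion of the truncated solution to the full set using strict interiority. Your handling of the interiority step --- ruling out $\|q^*-q_0\|=R'$ by testing the truncated VI against $q_0$ and invoking the strict coercivity inequality --- is the right way to close that gap. Nothing to fix.
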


Taking into account Theorems~\ref{th:VINE} and \ref{th:existVI1}, we obtain the following results.
\begin{theorem}\label{th:exist}
   Let $\Gamma(n,\{J_i\},\{\Om_i\},\Gra)$ be a game for which Assumptions~\ref{assum:convex}-\ref{assum:str_monotone} hold. Then, there exists a unique Nash equilibrium in $\Gamma$. Moreover, the Nash equilibrium in $\Gamma$ is the solution of $VI(\Om,\bF)$, where $\bF$ is the game mapping (see \eqref{eq:gamemapping}).
\end{theorem}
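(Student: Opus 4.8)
The plan is to obtain Theorem~\ref{th:exist} as a direct consequence of the two general results just quoted --- Theorem~\ref{th:VINE} (Nash equilibria $=$ solutions of $VI(\Om,\bF)$) and Theorem~\ref{th:existVI1} (existence and uniqueness for strongly monotone VIs) --- after checking that the hypotheses of each apply to the game $\Gamma$ under Assumptions~\ref{assum:convex}--\ref{assum:str_monotone}.

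First I would invoke Theorem~\ref{th:VINE}. Assumption~\ref{assum:convex} supplies exactly what is required there: each $\Om_i$ is closed and convex, and each $J_i(x_i,x_{-i})$ is continuously differentiable and convex in $x_i$ for every fixed $x_{-i}$. Hence $x^*\in\Om$ is a Nash equilibrium of $\Gamma$ if and only if $x^*\in SOL(\Om,\bF)$, where $\bF$ is the game mapping \eqref{eq:gamemapping}. This already yields the VI characterization claimed in the theorem, provided $SOL(\Om,\bF)$ turns out to be nonempty and a singleton.

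Next I would verify the hypotheses of Theorem~\ref{th:existVI1} for $VI(\Om,\bF)$. The set $\Om=\Om_1\times\cdots\times\Om_n$ is a Cartesian product of closed convex sets, hence itself closed and convex. By Assumption~\ref{assum:str_monotone}, $\bF$ is defined on all of $\R^n$ and strongly monotone there with constant $\muF$. The one remaining requirement of Theorem~\ref{th:existVI1} is continuity of $\bF$; I would get this from the ``smoothness'' of the cost functions, noting that the components of $\bF$ are the partial gradients $\nabla_i J_i$ and that joint continuity of these maps is what smoothness buys here (equivalently, it follows from the Lipschitz bounds of Assumption~\ref{assum:Lipschitz}). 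With $\Om$ closed and convex and $\bF$ continuous and strongly monotone, Theorem~\ref{th:existVI1} gives that $SOL(\Om,\bF)$ is nonempty and a singleton.

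Finally I would assemble the conclusion: let $x^*$ be the unique point of $SOL(\Om,\bF)$. By the equivalence of Theorem~\ref{th:VINE}, $x^*$ is a Nash equilibrium of $\Gamma$, and there is no other, since every Nash equilibrium must lie in $SOL(\Om,\bF)=\{x^*\}$; this delivers existence, uniqueness, and the stated VI characterization simultaneously. I do not expect any substantial obstacle --- the argument is essentially a concatenation of the two cited theorems --- and the only point that deserves a sentence of care is pinning down the continuity of $\bF$, which is why I would make that step explicit rather than leave it implicit.
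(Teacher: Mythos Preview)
Your proposal is correct and matches the paper's approach exactly: the paper does not give a separate proof but simply states that the result follows ``Taking into account Theorems~\ref{th:VINE} and \ref{th:existVI1}'', which is precisely the concatenation you describe. The only extra care you add is the explicit check of continuity of $\bF$; note that the theorem's hypotheses are Assumptions~\ref{assum:convex}--\ref{assum:str_monotone} only, so your primary justification via the smoothness in Assumption~\ref{assum:convex} is the right one, while the parenthetical appeal to Assumption~\ref{assum:Lipschitz} is, strictly speaking, outside the stated hypotheses.
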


Finally, if Assumption~\ref{assum:str_monotone} does not hold, but in addition to Assumption~\ref{assum:convex} all action sets are compact, then, according to Corollary 2.2.5 in \cite{FaccPang1}, we can guarantee only existence of a Nash equilibrium in the game under consideration.
\begin{theorem}\label{th:exist1}
   Let $\Gamma(n,\{J_i\},\{\Om_i\},\Gra)$ be a game for which Assumption~\ref{assum:convex} hold and $\Om_i$ is a compact set for all $i\in[n]$. Then, there exists a Nash equilibrium in $\Gamma$. Moreover, any Nash equilibrium in $\Gamma$ is the solution of $VI(\Om,\bF)$, where $\bF$ is the game mapping (see \eqref{eq:gamemapping}).
\end{theorem}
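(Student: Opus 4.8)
The plan is to obtain Theorem~\ref{th:exist1} as a direct consequence of the variational inequality reformulation of Theorem~\ref{th:VINE} together with the classical Hartman--Stampacchia existence result for variational inequalities over compact convex sets (Corollary~2.2.5 in \cite{FaccPang1}).

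First I would verify the hypotheses of Theorem~\ref{th:VINE}. Assumption~\ref{assum:convex} guarantees that each $\Om_i$ is closed and convex and that each $J_i(x_i,x_{-i})$ is convex and continuously differentiable in $x_i$ for every fixed $x_{-i}$; consequently the joint action set $\Om=\Om_1\times\ldots\times\Om_n$ is closed and convex, and the game mapping $\bF$ of Definition~\ref{def:gamemapping} is well defined on $\Om$. Applying Theorem~\ref{th:VINE} then yields the equivalence: $x^*\in\Om$ is a Nash equilibrium of $\Gamma$ if and only if $x^*\in SOL(\Om,\bF)$. This already establishes the second assertion of the theorem, namely that every Nash equilibrium of $\Gamma$ solves $VI(\Om,\bF)$.

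It then remains to prove nonemptiness of $SOL(\Om,\bF)$, and here I would bring in the extra compactness hypothesis. Since each $\Om_i$ is compact, the product $\Om$ is a nonempty compact convex subset of $\R^n$, and $\bF$ is continuous on $\Om$ (each component $\nabla_i J_i$ is a continuous function of $x$). By Corollary~2.2.5 in \cite{FaccPang1} the solution set $SOL(\Om,\bF)$ of $VI(\Om,\bF)$ is nonempty. Combining this with the equivalence from the previous step shows that $\Gamma$ admits at least one Nash equilibrium, which finishes the argument.

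I expect the only delicate point to be justifying continuity of $\bF$ on $\Om$, since Assumption~\ref{assum:convex} explicitly asserts only differentiability of $J_i$ in $x_i$ for fixed $x_{-i}$; this is handled by noting that the quoted existence result needs merely continuity of the operator, which is part of the standing smoothness on the cost functions, and that neither strong monotonicity (Assumption~\ref{assum:str_monotone}) nor the Lipschitz property (Assumption~\ref{assum:Lipschitz}) is invoked here --- which is exactly why, in contrast with Theorem~\ref{th:exist}, uniqueness of the equilibrium cannot be claimed in this setting.
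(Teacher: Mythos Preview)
Your proposal is correct and matches the paper's own treatment: the paper does not give a standalone proof of this theorem but simply invokes Corollary~2.2.5 in \cite{FaccPang1} for existence of a solution to $VI(\Om,\bF)$ over the compact convex set $\Om$, together with the equivalence of Theorem~\ref{th:VINE}. Your remark about the continuity of $\bF$ being the only point needing care is apt, and the paper likewise leaves this implicit.
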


Thus, if Assumptions~\ref{assum:convex}-\ref{assum:str_monotone} hold, we can guarantee existence and uniqueness of the Nash equilibrium in the game $\Gamma(n,\{J_i\},\{\Om_i\},\Gra)$ under consideration. However, the reformulation of the Nash equilibrium in terms of the solution of the variational inequality  $VI(\Om,\bF)$ does not take into account the distributed setting of the problem presented in the previous subsection. Hence, to let the players learn the unique Nash equilibrium in a distributed way, we need to find an alternative condition which allows us to determine the Nash equilibrium in presence of partial information exchange over the communication graph $\Gra$ with the associated mixing matrix $W$. In the next section, we provide such a condition and present  distributed algorithms for computing the Nash Equilibrium.

\section{Equivalent Condition for Nash Equilibria and Augmented Game Mapping}\label{sec:eqcond}
\subsection{Nash Equilibria in Distributed Settings}
To deal with the partial information available to players and exchanged among them over the communication graph, we assume that each player $i$ maintains a \emph{local variable}
\begin{align}\label{eq:vector}
x_{(i)}=[\tx_{(i)1},\cdots,\tx_{(i)i-1},x_i,\tx_{(i)i+1},\cdots,\tx_{(i)n}]^T\in\R^n,
\end{align}
which is her estimation of the joint action $x=[x_1,x_2,\cdots,x_n]^T\in\Om$.
Here $\tx_{(i)j}\in\R$ is the player $i$'s estimate of $x_j$ and  $\tx_{(i)i}=x_i\in\Om_i$. Also, we compactly denote the estimates of other players' actions by the player $i$ as
\begin{align}\label{eq:vector1}
\tx_{-i}=[\tx_{(i)1},\cdots,\tx_{(i)i-1},\tx_{(i)i+1},\cdots,\tx_{(i)n}]^T\in\R^{n-1},
\end{align}
and the estimates of the player $j$'s action $x_j$ by all players as
$$\tx_{(:)j}=[\tx_{(1)j},\cdots,\tx_{(j-1)j},x_{j},\tx_{(j+1)j},\cdots,\tx_{(n)j}]^T\in\R^n.$$
Thus, we can define the estimation matrix $\bx\in\R^{n\times n}$, where the $i$th row is equal to the estimation vector $x_{(i)}$, $i\in[n]$, namely
$$
  \bx\triangleq\left(
     \begin{array}{ccc}
       \textrm{---}& x_{(1)}^\T & \textrm{---} \\
       \textrm{---}& x_{(2)}^\T & \textrm{---} \\
       &\vdots& \\
       \textrm{---}& x_{(n)}^\T & \textrm{---} \\
     \end{array}
   \right).
$$
Note that the estimation matrix belongs to a subset of the space $\R^{n\times n}$ that we denote by $\Oma$. The set $\Oma$ consists of matrices whose diagonal vectors are from the set of joint action set $\Om$, i.e. $\Oma \triangleq \{\bx\in \R^{n\times n}:\, \di(\bx) \in \Om\}.$
Finally, for any given estimation matrix, we define the diagonal matrix $\tbF(\bx)\in\R^{n\times n}$ with $\tbF(\bx)_{ii}= \nabla_i J_i(x_{(i)})$, $i\in[n]$, namely
\begin{align}\label{eq:diaggrad}
 \tbF(\bx)\triangleq \Diag(\nabla_1 J_1(x_{(1)}),\ldots,\nabla_n J_n(x_{(n)})).
\end{align}

Now we are ready to state the proposition providing a necessary and sufficient condition for a joint action $x^*$ to be a Nash equilibrium in the game  $\Gamma(n,\{J_i\},\{\Om_i\},\Gra)$.
\begin{proposition}\label{prop:opt}
Let us consider the game $\Gamma(n,\{J_i\},\{\Om_i\},\Gra)$ for which Assumptions~\ref{assum:convex} and \ref{assum:connected} hold. Then the
following statements are equivalent
\begin{enumerate}
\item  The vector $x^* = [x_1^*,\ldots,x_n^*]^T$ is a Nash equilibrium in $\Gamma(n,\{J_i\},\{\Om_i\},\Gra)$.
\item There exists an estimation matrix $\bx^* \in\Oma$ with the diagonal vector $x^* = [x_1^*,\ldots,x_n^*]^T\in\Omega$ and corresponding diagonal matrix $\tbF(\bx^*)$ (see~\eqref{eq:diaggrad}) such that for any $\bx \in\Oma$ the following holds:
\begin{align*}
  &\langle(I-W)\bx^* + \Lam\tbF(\bx^*), \bx - \bx^*\rangle \ge 0,
  \end{align*}
  where $\Lam=\Diag(\alpha_1,\ldots,\alpha_n)\in\R^{n\times n}$ is an arbitrary diagonal matrix with $\alpha_i>0$, $\forall i\in[n]$.
\end{enumerate}
\end{proposition}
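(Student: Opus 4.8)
The plan is to establish the two implications separately, both hinging on two structural facts: that $(I-W)\bx^*$ vanishes precisely when $\bx^*$ has all rows equal (which uses connectedness of $\Gra$ through $\nul{I-W}=\spa{\one}$), and that $\Lam\tbF(\bx^*)$ is diagonal, so the Frobenius pairing $\langle\Lam\tbF(\bx^*),\,\cdot\,\rangle$ only sees diagonal entries.

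For $(1)\Rightarrow(2)$ I would exhibit the explicit estimate $\bx^*=\one(x^*)^\T$, the matrix each of whose rows equals $(x^*)^\T$. Since $x^*\in\Om$ we have $\bx^*\in\Oma$ with $\di(\bx^*)=x^*$, and since every row of $\bx^*$ equals $x^*$, definition \eqref{eq:diaggrad} gives $\tbF(\bx^*)=\Diag(\bF(x^*))$. Using $W\one=\one$ (row-stochasticity, \eqref{eq:dstoch}) we get $(I-W)\bx^*=(I-W)\one(x^*)^\T=\zero$, so the inequality in (2) collapses to $\langle\Lam\Diag(\bF(x^*)),\bx-\bx^*\rangle\ge 0$ for all $\bx\in\Oma$. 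Because $\Lam\Diag(\bF(x^*))$ is diagonal, this pairing equals $\sum_{i=1}^{n}\alpha_i\nabla_i J_i(x^*)(x_i-x_i^*)$, where $x=\di(\bx)$ ranges over all of $\Om=\Om_1\times\cdots\times\Om_n$; since $\alpha_i>0$ and the coordinates vary independently, nonnegativity of this sum over $\Om$ is equivalent to $\nabla_i J_i(x^*)(x_i-x_i^*)\ge 0$ for each $i$ and all $x_i\in\Om_i$. The latter is precisely the first-order optimality condition for $x_i^*$ to minimize the convex function $J_i(\cdot,x_{-i}^*)$ over $\Om_i$, which holds because $x^*$ is a Nash equilibrium (equivalently, by Theorem~\ref{th:VINE}, $x^*\in SOL(\Om,\bF)$); hence the inequality in (2) holds with this choice of $\bx^*$.

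For $(2)\Rightarrow(1)$ the first step is to show any $\bx^*\in\Oma$ satisfying the inequality in (2) is necessarily consensual. Since $\Oma$ constrains only the diagonal, for any matrix $\delta$ with zero diagonal both $\bx^*+\delta$ and $\bx^*-\delta$ lie in $\Oma$; substituting them into the inequality and using $\langle\Lam\tbF(\bx^*),\delta\rangle=0$ (as $\delta$ has zero diagonal and $\Lam\tbF(\bx^*)$ is diagonal) forces $\langle(I-W)\bx^*,\delta\rangle=0$ for every such $\delta$, hence the off-diagonal entries of $(I-W)\bx^*$ all vanish. Next, left-multiplying the $j$th column of $(I-W)\bx^*$ by $\one^\T$ and using $\one^\T(I-W)=\zero^\T$ (column-stochasticity, \eqref{eq:dstoch}) shows its remaining diagonal entries vanish too, so $(I-W)\bx^*=\zero$; by \eqref{eq:null}, $\nul{I-W}=\spa{\one}$, so each column of $\bx^*$ is a multiple of $\one$, and since the $j$th diagonal entry of $\bx^*$ equals $x_j^*$ we get $\bx^*=\one(x^*)^\T$. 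The second step then mirrors the forward direction: $(I-W)\bx^*=\zero$ and $\tbF(\bx^*)=\Diag(\bF(x^*))$, so the inequality in (2) reduces to $\sum_i\alpha_i\nabla_i J_i(x^*)(x_i-x_i^*)\ge 0$ for all $x\in\Om$, and by the product structure of $\Om$ and positivity of the $\alpha_i$ this gives $\nabla_i J_i(x^*)(x_i-x_i^*)\ge0$ for all $i$ and $x_i\in\Om_i$, which by convexity of $J_i$ in $x_i$ means $x_i^*$ minimizes $J_i(\cdot,x_{-i}^*)$ over $\Om_i$, i.e. $x^*$ is a Nash equilibrium.

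The step I expect to be the main obstacle is the consensuality argument in $(2)\Rightarrow(1)$: one must carefully combine the fact that $\Oma$ imposes no restriction off the diagonal (to annihilate the off-diagonal part of $(I-W)\bx^*$ by perturbation) with the double stochasticity of $W$ and connectedness of $\Gra$ (to annihilate the diagonal part and then identify the null space). Everything else is a routine rewriting of a Frobenius pairing against a diagonal matrix together with an appeal to convexity; Assumption~\ref{assum:connected} enters exactly through $\nul{I-W}=\spa{\one}$, without which statement (2) would admit non-consensual estimates $\bx^*$.
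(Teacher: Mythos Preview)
Your proof is correct, and the forward direction coincides with the paper's. For $(2)\Rightarrow(1)$ the paper takes a slightly different path to consensuality: it perturbs $\bx^*$ one entry at a time and packages the resulting family of inequalities as the equation $(I-W)\bx^* + \Lam(\tbF(\bx^*)+\bG(x^*))=\zero$, where $\bG(x^*)$ is a diagonal matrix of subgradients of the indicator functions $\I_{\Om_i}$; then, for each column $j$, it discards the $j$th row (the one carrying the unknown normal-cone term) and invokes that $[I-W]_{-j}$ has full row rank, from $\nul{I-W}=\spa{\one}$, to force $\tx_{(:)j}^*\in\spa{\one}$. Your route is more elementary: you first use the unrestricted off-diagonal of $\Oma$ to annihilate the off-diagonal of $(I-W)\bx^*$, and then use $\one^\T(I-W)=\zero^\T$ to annihilate its diagonal as well, bypassing both the subgradient formalism and the submatrix rank argument. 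The tradeoff is that your argument explicitly uses the column-stochastic half of \eqref{eq:dstoch}, whereas the paper's version is phrased to extend to merely row-stochastic $W$ on strongly connected directed graphs (cf.\ the remark immediately following the paper's proof); under Assumption~\ref{assum:connected} both approaches are valid.
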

\begin{proof}
From the definition of Nash equilibrium (see De\-finition~\ref{def:NE}) and Assumption~\ref{assum:convex} we conclude that $x^* = [x_1^*,\ldots,x_n^*]^T$ is a Nash equilibrium in $\Gamma(n,\{J_i\},\{\Om_i\},\Gra)$ if and only if there exists the estimation vector $x_{(i)}^*\in\R^n,\forall i\in[n]$, such that
\begin{subequations}\label{eq:NE_optcond}
\begin{align}
  \langle \dJi{i}(x_{(i)}^*),x_i - x_i^*\rangle & \ge 0, \forall x_i\in\Om_i, \label{eq:NE_optcond1}\\
  x_{(1)}^*=x_{(2)}^*=\ldots&=x_{(n)}^*. \label{eq:NE_optcond2}
\end{align}
\end{subequations}
Now we proceed with showing the equivalence \eqref{eq:NE_optcond} $\Leftrightarrow$ 2).

The implication \eqref{eq:NE_optcond} $\Rightarrow$ 2) holds, if we take $\bx^*$ to be the matrix with each row equal to transpose of $x^*$. Indeed, in this case $(I-W)\bx^* = 0$ (since $\sum_{i=1}^{n}w_{ij} = 1$, $\forall j\in[n]$, due to Assumption~\ref{assum:connected}) and, according to \eqref{eq:NE_optcond}, $\dJi{i}(x^*)(x_i - x_i^*)\ge 0$ for all $i\in[n]$.

To show 2) $\Rightarrow$ \eqref{eq:NE_optcond} we define a matrix $\bx(i,j)$, where its element $\bx(i,j)_{kl}$, $k\ne i$, $l\ne j$, is equal to the corresponding element $\bx^*_{kl}$ of the matrix $\bx^*$, $\bx(i,j)_{ij}$ is an arbitrary real number, if $i\ne j$, and $\bx(i,j)\in \Omega_i$, if $i=j$. Next, let us consider the inequalities
\[\langle(I-W)\bx^* + \Lam\tbF(\bx^*), \bx(i,j) - \bx^*\rangle \ge 0, \ \text{ for all $i,j\in[n]$}.\]
From the inequalities above we can conclude that
\begin{equation}\label{eq:line13}
  (I-W)\bx^* + \Lam(\tbF(\bx^*) + \bG(x^*)) = \mathbf{0},
\end{equation}
where $\bG(x^*) = \Diag(\sd \I_{\Om_1}(x_1),\ldots,\sd \I_{\Om_n}(x_n))$ and $\sd \I_{\Om_i}(x_i)$ is a subgradient of the indicator function $\ind_{\Om}(x_i)$ of the set $\Omega_i$.

Recall that the $j$-th column of $\bx^*$ is denoted by $\tx_{(:)j}^*$. For each $\tx_{(:)j}^*,\ j=1,\cdots,n$, discarding the $j$-th equation contained in \eqref{eq:line13} gives
$$[I-W]_{-j}\tx_{(:)j}^*=0 ,\ j=1,\cdots,n.$$
According to \eqref{eq:null}, the matrix $[I-W]_{-j}$ has the full row rank for any $j\in[n]$. Thus, that only vectors $\tx_{(:)j}^*\in\spa{\mathbf{1}},\ j\in[n],$ are solutions to the systems of the linear equations above.

 Therefore, $(I-W)\bx^*=0$ and \eqref{eq:NE_optcond2} holds. Furthermore, we have $\tbF(\bx^*)+\bG(\bx^*)=0$ and, thus, \eqref{eq:NE_optcond1} follows.
\end{proof}

 Note that, regarding conditions for the communication graph, to prove the proposition above we used only assumption on stochastic rows of the matrix $W$ and condition \eqref{eq:null}, which can correspond to any directed strongly connected communication graph. Thus, the equivalent reformulation of Nash equilibria presented in Proposition~\ref{prop:opt} holds also for any convex game $\Gamma(n,\{J_i\},\{\Om_i\},\Gra)$, where the graph $\Gra$ is possibly directed, strongly connected and the mixing matrix $W$ is row stochastic, i.e. $\sum_{i=1}^{n}w_{ij} = 1$, $\forall j\in[n]$.

Moreover, in proof of  Proposition~\ref{prop:opt} we did not use Assumption~\ref{assum:str_monotone} requiring strongly monotone game mapping. However, we need this assumption in the next subsections, where we present distributed algorithms and prove their convergence to the Nash Equilibrium with a geometric rate.

\subsection{Augmented Mapping}
According to Proposition~\ref{prop:opt}, to determine a Nash equilibrium in the game $\Gamma(n,\{J_i\},\{\Om_i\},\Gra)$ under consideration we need find an estimation matrix $\bx^*$ that solves the following variational inequality:
\begin{align}\label{eq:distVI}
  &\langle(I-W)\bx^*+ \Lam\tbF(\bx^*), \bx - \bx^*\rangle \ge 0, \quad \forall \bx\in\Oma.
\end{align}
where $\Lam\in\R^{n\times n}$ is an arbitrary diagonal matrix $\Lam = \Diag(\alpha_1,\ldots,\alpha_n),$ $\alpha_i>0$, $\forall i\in[n]$.
Due to Proposition~\ref{prop:opt}, any solution matrix $\bx^*$ of the variational inequality above is consensual and its diagonal vector, as well as any row, represents a Nash equilibrium in $\Gamma$. Further we call such
estimation matrix $\bx^*$ the \emph{Nash equilibrium matrix}.
Note that \eqref{eq:distVI} is defined in terms of the matrix mapping $\bFa:\R^{n\times n}\rightarrow\R^{n\times n}$:
\begin{align}\label{eq:augmap}
 \bFa(\bx) = (I-W)\bx + \Lam\tbF(\bx).
\end{align}
We refer to $\bFa(\bx)$ above as to the \emph{augmented mapping} of the game $\Gamma(n,\{J_i\},\{\Om_i\},\Gra)$. In contrast to the game mapping $\tbF(\bx)$ (see \eqref{eq:gamemapping}), the augmented mapping allows us to take into account the partial information exchange among the players over the graph $\Gra$. To set up efficient distributed procedures with fast convergence rates, we leverage the results on centralized algorithms for classical variational inequalities with strongly monotone and Lipschitz continuous mappings obtained in \cite{Nesterov}. The summary of these results and corresponding algorithms for distributed settings are presented in the next subsections. We conclude this subsection by formulating conditions under which the augmented mapping is Lipschitz continuous, strongly monotone, or restricted strongly monotone and, thus, the results from \cite{Nesterov} can be applied to the game $\Gamma(n,\{J_i\},\{\Om_i\},\Gra)$.

\begin{lemma}\label{lem:augmap_LC}
 Under Assumption~\ref{assum:Lipschitz}, the augmented mapping $\bFa(\bx)$ of the game $\Gamma(n,\{J_i\},\{\Om_i\},\Gra)$ is Lipschitz continuous on $\Oma$ with the Lipschitz constant $\LLF+\smax{I-W}$, where $\LLF=\max\limits_{i}\left\{\alpha_i\sqrt{L_i^2+L_{-i}^2}\right\}$.
\end{lemma}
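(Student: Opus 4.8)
The plan is to bound the norm of $\bFa(\bx) - \bFa(\by)$ for arbitrary $\bx, \by \in \Oma$ by splitting the augmented mapping into its two additive pieces, $(I-W)\bx$ and $\Lam\tbF(\bx)$, and invoking the triangle inequality in the Frobenius norm:
\[
\|\bFa(\bx) - \bFa(\by)\|_{\Fro} \le \|(I-W)(\bx-\by)\|_{\Fro} + \|\Lam(\tbF(\bx) - \tbF(\by))\|_{\Fro}.
\]
The first term is immediate: $\|(I-W)(\bx-\by)\|_{\Fro} \le \smax{I-W}\,\|\bx - \by\|_{\Fro}$, since for any matrix $A$ and any $M$ one has $\|AM\|_{\Fro} \le \smax{A}\|M\|_{\Fro}$ (apply the spectral bound column by column). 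This contributes the $\smax{I-W}$ summand to the claimed constant.

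The second term requires estimating the diagonal matrix $\Lam(\tbF(\bx) - \tbF(\by))$. Since this matrix is diagonal, its Frobenius norm squared is $\sum_{i=1}^n \alpha_i^2 \bigl(\nabla_i J_i(x_{(i)}) - \nabla_i J_i(y_{(i)})\bigr)^2$. First I would bound each scalar difference $|\nabla_i J_i(x_{(i)}) - \nabla_i J_i(y_{(i)})|$ using Assumption~\ref{assum:Lipschitz}: writing $x_{(i)} = (x_i, \tx_{-i})$ and $y_{(i)} = (y_i, \ty_{-i})$ and inserting the intermediate point $(x_i, \ty_{-i})$, the triangle inequality plus the two Lipschitz bounds give
\[
|\nabla_i J_i(x_{(i)}) - \nabla_i J_i(y_{(i)})| \le L_i |x_i - y_i| + L_{-i}\|\tx_{-i} - \ty_{-i}\|_2.
\]
Then I would square this, use $(a+b)^2 \le 2(a^2+b^2)$ — or, more sharply, Cauchy–Schwarz in the form $(L_i a + L_{-i} b)^2 \le (L_i^2 + L_{-i}^2)(a^2 + b^2)$, which is what yields exactly $\sqrt{L_i^2 + L_{-i}^2}$ — to obtain $|\nabla_i J_i(x_{(i)}) - \nabla_i J_i(y_{(i)})|^2 \le (L_i^2 + L_{-i}^2)\bigl(|x_i - y_i|^2 + \|\tx_{-i} - \ty_{-i}\|_2^2\bigr) = (L_i^2+L_{-i}^2)\|x_{(i)} - y_{(i)}\|_2^2$, where $\|x_{(i)} - y_{(i)}\|_2^2$ is exactly the squared Euclidean norm of the $i$th row of $\bx - \by$.

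Summing over $i$, weighting by $\alpha_i^2$, and bounding each $\alpha_i^2(L_i^2+L_{-i}^2)$ by $\max_j \alpha_j^2(L_j^2 + L_{-j}^2) = \LLF^2$, I get $\|\Lam(\tbF(\bx)-\tbF(\by))\|_{\Fro}^2 \le \LLF^2 \sum_{i=1}^n \|x_{(i)} - y_{(i)}\|_2^2 = \LLF^2\|\bx - \by\|_{\Fro}^2$, since the Frobenius norm is the sum of squared row norms. Taking square roots and combining with the first term finishes the argument. I do not expect a genuine obstacle here; the only point requiring mild care is the bookkeeping that identifies $\sum_i \|x_{(i)} - y_{(i)}\|_2^2$ with $\|\bx-\by\|_{\Fro}^2$ and the choice of Cauchy–Schwarz (rather than the cruder $2(a^2+b^2)$) to land exactly on the stated constant $\sqrt{L_i^2 + L_{-i}^2}$ rather than $\sqrt{2}\max\{L_i, L_{-i}\}$.
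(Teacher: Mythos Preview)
Your proposal is correct and follows essentially the same route as the paper: split $\bFa$ into the $(I-W)$ piece (bounded by $\smax{I-W}$) and the $\Lam\tbF$ piece, then use the two Lipschitz hypotheses via an intermediate point to get the per-row bound $\sqrt{L_i^2+L_{-i}^2}\,\|x_{(i)}-y_{(i)}\|_2$ and sum. The only cosmetic difference is that the paper obtains the sharp constant $\sqrt{L_i^2+L_{-i}^2}$ by applying the parametrized inequality $\|u+v\|^2\le\beta\|u\|^2+\tfrac{\beta}{\beta-1}\|v\|^2$ and optimizing over $\beta$, whereas your direct use of Cauchy--Schwarz on $(L_i,L_{-i})\cdot(a,b)$ reaches the same bound more cleanly.
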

\begin{proof}
  By Assumption \ref{assum:Lipschitz}, we see that $\forall x, y\in\R^n$ such that $x_i, y_i\in\Om_i$ and $x_{-i}, y_{-i}\in\R^{n-1}$, we have
\begin{align*}
&    \|\dJi{i}(x_i,x_{-i})-\dJi{i}(y_i,y_{-i})\|_2\cr
& =  \|\dJi{i}(x_i,x_{-i})-\dJi{i}(y_i,x_{-i})\cr
&\qquad\quad +\dJi{i}(y_i,x_{-i})-\dJi{i}(y_i,y_{-i})\|_2\cr
&\leq(\beta\|\dJi{i}(x_i,x_{-i})-\dJi{i}(y_i,x_{-i})\|_2^2\cr
&\qquad\quad +\frac{\beta}{\beta-1}\|\dJi{i}(y_i,x_{-i})-\dJi{i}(y_i,y_{-i})\|_2^2)^{\frac{1}{2}}\cr
&\qquad\qquad \mbox{ (for any $\beta>1$)}\cr
&\leq\left(\beta L_i^2\|x_i-y_i\|_2^2+\frac{\beta}{\beta-1}L_{-i}^2\|x_{-i}-y_{-i}\|_2^2\right)^{\frac{1}{2}}\cr &\qquad\qquad \mbox{ (choose $\beta=1+\frac{L_{-i}^2}{L_i^2}$)}\cr
& =  \sqrt{L_i^2+L_{-i}^2}\|x-y\|_2\triangleq L_{(i)}\|x-y\|_2,
\end{align*}
therefore,
\begin{align}\label{eq:tbF}
  \|\tbF(\bx)-\tbF(\by)\|_\Fro\leq\LF\|\bx-\by\|_\Fro,\ \forall\ \bx,\ \by,
\end{align}
where $\LF=\max\limits_{i}\left\{L_{(i)}\right\}$.

Thus, we have
\begin{align*}
\|\Lam(\tbF(\bx)&-\tbF(\by))+(I-W)(\bx-\by)\|_\Fro\cr
&\leq\left(\LLF+\smax{I-W}\right)\|\bx-\by\|_\Fro,\ \forall\ \bx,\ \by,
\end{align*}
where $\LLF=\max\limits_{i}\left\{\alpha_i L_{(i)}\right\}$.
\end{proof}

\begin{lemma}\label{lem:augmap_strmon}
 Under Assumptions~\ref{assum:str_monotone} and \ref{assum:connected} and given
 \begin{subequations}\label{eq:a}
 \begin{align}
   &a_1 = \tilde\lambda_{\min}\{I-W\}\cr
&\quad-0.5\max_i\{\alpha_i(\sqrt{\mu_{\bF}^2+L_{-i}^2}-\mu_{\bF})\}>0, \label{eq:a1}\\
   &a_2 = \min_i\left\{\frac{\alpha_i}{n}(\mu_{\bF}-L_{-i}\sqrt{n-1})\right\}>0, \label{eq:a2}
 \end{align}
 \end{subequations}
 the augmented mapping $\bFa(\bx)$ of the game $\Gamma(n,\{J_i\},\{\Om_i\},\Gra)$ is strongly monotone with the constant
 \begin{align*}
  \mu_{\bFa} = \min\{a_1,a_2\}.
 \end{align*}
\end{lemma}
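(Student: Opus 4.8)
The plan is to verify the defining inequality $\langle\bFa(\bx)-\bFa(\by),\,\bx-\by\rangle\ge\min\{a_1,a_2\}\,\|\bx-\by\|_\Fro^2$ for all $\bx,\by\in\Oma$ directly. Put $\bz:=\bx-\by$ with entries $z_{ij}=\bx_{ij}-\by_{ij}$, and write $d_i:=z_{ii}=x_i-y_i$ and $r_i:=\big(\sum_{j\ne i}z_{ij}^2\big)^{1/2}$, so that $\|\bz\|_\Fro^2=\sum_{i=1}^n\big(d_i^2+r_i^2\big)$. Since left-multiplication by $I-W$ is linear, split the form as
\[
\langle\bFa(\bx)-\bFa(\by),\bz\rangle=\langle(I-W)\bz,\bz\rangle+\big\langle\Lam\big(\tbF(\bx)-\tbF(\by)\big),\bz\big\rangle=:T_1+T_2,
\]
and bound the pieces separately, $T_1$ controlling the disagreement among the rows of $\bz$ (equivalently, the non-constancy of its columns) and $T_2$ controlling the diagonal part.

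For $T_1$: by Assumption~\ref{assum:connected} the matrix $I-W$ is symmetric positive semidefinite with $\nul{I-W}=\spa{\one}$, and $(I-W)\big(I-\tfrac1n\one\one^\T\big)=I-W$ because $W\one=\one$. Hence, with $\Pi:=I-\tfrac1n\one\one^\T$, one gets $T_1=\langle(I-W)\Pi\bz,\Pi\bz\rangle\ge\tilde\lambda_{\min}\{I-W\}\,\|\Pi\bz\|_\Fro^2$, and I would rewrite $\|\Pi\bz\|_\Fro^2=\|\bz\|_\Fro^2-\tfrac1n\sum_{k}\big(\sum_i z_{ik}\big)^2$ in order to later separate the diagonal and off-diagonal contributions. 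For $T_2$: since $\Lam(\tbF(\bx)-\tbF(\by))$ is diagonal, $T_2=\sum_i\alpha_i\big(\nabla_iJ_i(x_{(i)})-\nabla_iJ_i(y_{(i)})\big)d_i$; inserting in each summand the intermediate point obtained from $x_{(i)}$ by replacing its $i$-th entry with $y_i$, the increment over the own coordinate is at least $\mu_{\bF}d_i^2$ by strong convexity of $J_i(\cdot,x_{-i})$ (Remark~\ref{rem:str_mon}, a consequence of Assumption~\ref{assum:str_monotone}), while the increment over the remaining $n-1$ coordinates has absolute value at most $L_{-i}r_i$ by Lipschitz continuity in $x_{-i}$ (the constant $L_{-i}$ of Assumption~\ref{assum:Lipschitz}, which is implicitly in force since it appears in \eqref{eq:a}). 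Thus $T_2\ge\sum_i\alpha_i\mu_{\bF}d_i^2-\sum_i\alpha_iL_{-i}r_i|d_i|$.

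The remaining, and hardest, step is to combine these bounds into $T_1+T_2\ge a_1\sum_ir_i^2+a_2\sum_id_i^2$, from which $\langle\bFa(\bx)-\bFa(\by),\bz\rangle\ge\min\{a_1,a_2\}\|\bz\|_\Fro^2$ follows since $a_1,a_2\ge0$. To the cross terms I would apply Young's inequality with the weight $c_i$ satisfying $L_{-i}/c_i=\sqrt{\mu_{\bF}^2+L_{-i}^2}-\mu_{\bF}$, giving $\alpha_iL_{-i}r_i|d_i|\le\tfrac{\alpha_i}{2}\big(\sqrt{\mu_{\bF}^2+L_{-i}^2}+\mu_{\bF}\big)d_i^2+\tfrac{\alpha_i}{2}\big(\sqrt{\mu_{\bF}^2+L_{-i}^2}-\mu_{\bF}\big)r_i^2$: the $r_i^2$-coefficient is exactly the quantity subtracted from $\tilde\lambda_{\min}\{I-W\}$ in \eqref{eq:a1}, so it is paid for out of the positive semidefinite term $T_1$, leaving (at least) $a_1$ in front of $\sum_ir_i^2$, while the part of the $d_i^2$-overhead above $\alpha_i\mu_{\bF}d_i^2$ must be covered using the diagonal contribution still hidden inside $\|\Pi\bz\|_\Fro^2$. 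Carrying out this reconciliation is where the two factors in $a_2$ are generated: the $\tfrac1n$ from the $-\tfrac1n\sum_k(\cdot)^2$ term in $\|\Pi\bz\|_\Fro^2$, and the $\sqrt{n-1}$ from Cauchy--Schwarz applied to the $n-1$ off-diagonal entries of a given row (or column), $\big|\sum_{i\ne k}z_{ik}\big|\le\sqrt{n-1}\,\big(\sum_{i\ne k}z_{ik}^2\big)^{1/2}$. The main obstacle I anticipate is precisely this bookkeeping: the lower bound for $T_1$ is naturally stated in terms of column-disagreement whereas the lower bound for $T_2$ lives on the diagonal/off-diagonal blocks of $\bz$, and the Young and Cauchy--Schwarz weights must be chosen tightly enough that the sharp constants $a_1$ and $a_2$ — rather than weaker ones — emerge; the positivity requirements \eqref{eq:a1} and \eqref{eq:a2} are exactly what guarantees the resulting two coefficients are positive.
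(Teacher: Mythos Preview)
Your setup---splitting into the graph term $T_1=\langle(I-W)\bz,\bz\rangle$ and the gradient term $T_2$, and bounding $T_2\ge\sum_i\alpha_i\mu_{\bF}\,d_i^2-\sum_i\alpha_i L_{-i}\,r_i|d_i|$ via strong convexity in the own variable plus Lipschitz continuity in the others---matches the paper exactly. The divergence is in how the two pieces are combined.

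The paper does \emph{not} pursue a single inequality of the shape $T_1+T_2\ge a_1\sum_ir_i^2+a_2\sum_id_i^2$. Instead it argues by a dichotomy on $\bz=\bx-\by$. When $\bz$ is consensual, $T_1=0$ and Young's inequality is applied to the cross term with weight $\beta_i=1/\sqrt{n-1}$; since a consensual $\bz$ satisfies $\|\bz\|_\Fro^2=n\sum_id_i^2$, this is where the $1/n$ and the $\sqrt{n-1}$ in $a_2$ actually originate. When $\bz$ is not consensual, the paper instead uses $\beta_i=(\sqrt{\mu_{\bF}^2+L_{-i}^2}-\mu_{\bF})/L_{-i}$---your choice---to obtain $T_2\ge-\tfrac12\max_i\{\alpha_i(\sqrt{\mu_{\bF}^2+L_{-i}^2}-\mu_{\bF})\}\|\bz\|_\Fro^2$, and combines this with $T_1$ to produce $a_1$. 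Two cases, two Young weights, two constants.

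Your single-weight unified plan breaks at exactly the step you flagged. You want to pay the $r_i^2$-cost out of $T_1$, but $T_1$ measures \emph{column} disagreement and vanishes on every consensual $\bz$, whereas $\sum_ir_i^2$ is off-diagonal \emph{row} mass and is typically positive there: take $\bz$ with every row equal to $(1,0,\ldots,0)$, so that $T_1=0$ while $\sum_ir_i^2=n-1$. Hence the claim that $T_1$ leaves ``at least $a_1$ in front of $\sum_ir_i^2$'' is false in general, and no Cauchy--Schwarz rearrangement repairs this incomparability. Likewise, the $1/n$ in $a_2$ does not come from the $-\tfrac1n\sum_k(\cdot)^2$ term in your expansion of $\|\Pi\bz\|_\Fro^2$; it comes from $\|\bz\|_\Fro^2=n\sum_id_i^2$ for consensual $\bz$. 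The missing idea is the case split with a \emph{different} Young weight on consensual $\bz$, rather than attempting to make one weight work uniformly.
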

\begin{proof}
	(i) For any $\bx,\by\in\Oma$, when $\bx-\by$ is not consensual,  due to Assumption~\ref{assum:str_monotone} (see Remark~\ref{rem:str_mon}), we have
		\begin{align}\label{eq:lemma2_1}
	&\langle\Lam(\tbF(\bx)-\tbF(\by)),\bx-\by\rangle\cr
	&=\sum\limits_{i=1}^n\langle\alpha_i(\nabla_i J_i(x_i,\tx_{-i})-\nabla_i J_i(y_i,\ty_{-i})),x_i-y_i\rangle\cr
	&=\sum\limits_{i=1}^n\langle\alpha_i(\nabla_i J_i(x_i,\tx_{-i})-\nabla_i J_i(y_i,\tx_{-i})),x_i-y_i\rangle\cr
&\qquad+\sum\limits_{i=1}^n\langle\alpha_i(\nabla_i J_i(y_i,\tx_{-i})-\nabla_i J_i(y_i,\ty_{-i})),x_i-y_i\rangle\cr
	&\geq\sum_{i=1}^n\alpha_i\mu_{\bF}(x_i-y_i)^2-\sum\limits_{i=1}^n\alpha_i L_{-i}\|\tx_{-i}-\ty_{-i}\|_2\cdot|x_i-y_i|\cr
	&\geq\sum_{i=1}^n\alpha_i\mu_{\bF}(x_i-y_i)^2\cr
&\quad-\sum\limits_{i=1}^n\{\frac{\alpha_i L_{-i}\beta_i}{2}\|\tx_{-i}-\ty_{-i}\|_2^2+\frac{\alpha_i L_{-i}}{2\beta_i}(x_i-y_i)^2\}\cr
&\qquad \mbox{($\beta_i>0$ $\forall i\in[n]$ are arbitrary)}.
	\end{align}

	Further choosing $\beta_i=\frac{\sqrt{\mu_{\bF}^2+L_{-i}^2}-\mu_{\bF}}{L_{-i}}$ in \eqref{eq:lemma2_1}, we have
	\begin{equation}\label{eq:lemma2_2}
	\begin{array}{rcl}
	& &\langle\Lam(\tbF(\bx)-\tbF(\by)),\bx-\by\rangle\\
	&\geq&-0.5\max_i\{\alpha_i(\sqrt{\mu_{\bF}^2+L_{-i}^2}-\mu_{\bF})\}\|\bx-\by\|_\Fro^2.
	\end{array}
	\end{equation}
	Thus, due to Assumption~\ref{assum:connected} (see \eqref{eq:norm}), we obtain
	\begin{align*}
		 \langle&\bFa(\bx)-\bFa(\by),\bx-\by\rangle\cr
		&\geq\tilde\lambda_{\min}\{I-W\}\|\bx-\by\|_\Fro^2\cr
&\quad-0.5\max_i\{\alpha_i(\sqrt{\mu_{\bF}^2+L_{-i}^2}-\mu_{\bF})\}\|\bx-\by\|_\Fro^2,
	\end{align*}
	Given the condition \eqref{eq:a1} and definition of $\mu_{\bF_a}$, we conclude that
 $$\langle\bFa(\bx)-\bFa(\by),\bx-\by\rangle \ge \mu_{\bF_a}\|\bx-\by\|_\Fro^2.$$
	
	(ii) For any $\bx,\by\in\Oma$, when $\bx-\by$ is consensual, \eqref{eq:lemma2_1} is still valid.  However, $(I-W)(\bx-\by)=0$. Thus,
	\begin{align*}
	&\langle\bFa(\bx)-\bFa(\by),\bx-\by\rangle=\langle\Lam(\tbF(\bx)-\tbF(\by)),\bx-\by\rangle\cr
	&\geq\sum_{i=1}^n\alpha_i\mu_{\bF}(x_i-y_i)^2\cr
&-\sum\limits_{i=1}^n\{\frac{\alpha_i L_{-i}\beta_i}{2}\|\tx_{-i}-\ty_{-i}\|_2^2+\frac{\alpha_i L_{-i}}{2\beta_i}(x_i-y_i)^2\}.
	\end{align*}
	Further, by choosing $\beta_i=\frac{1}{\sqrt{n-1}}$ and taking into account \eqref{eq:a2}, we get
	\begin{align*}
	&\langle\bFa(\bx)-\bFa(\by),\bx-\by\rangle\cr
	&\geq\sum\limits_{i=1}^n\alpha_i(\mu_{\bF}-L_{-i}\sqrt{n-1})\|x_i-y_i\|_2^2\cr
	&\geq\min_i\left\{\frac{\alpha_i}{n}(\mu_{\bF}-L_{-i}\sqrt{n-1})\right\}\|\bx-\by\|_\Fro^2\cr
    &\geq\mu_{\bF_a}\|\bx-\by\|_\Fro^2.
	\end{align*}
\end{proof}
\begin{remark}\label{rem:augm_str_mon}
Lemma~\ref{lem:augmap_strmon} holds for games with $\mu_{\bF}>\sqrt{n-1}L_{-i}$ for all $i$. Intuitively speaking, this means that, the strong monotonicity of the game mapping should be strong enough while for each player $i$, its local objective's dependency on other player's strategies should be less than quadratically or have small enough quadratic terms. To make this easier to imagine, let us consider a class of game with objectives in the form of $J_i(x_i,x_{-i})=f_i(x_i)+l_i(x_{-i})x_i$, where $f_i$'s are twice differentiable and strongly convex while $l_i(x_{-i})$'s are linear in $x_{-i}$. For this special case, the Jacobian of the game mapping $\bF(\bx)$ is always positive definite as long as the strong convexity constant of $f_i(x_i)$'s are large enough because the Jacobian's off-diagonal entries are all bounded. Meanwhile, in this case, the Lipschitz constant $L_{-i}$ is any number that satisfies $|l_i(x_{-i})-l_i(y_{-i})|\leq L_{-i}\|x_{-i}-y_{-i}\|_2$ for all $x_{-i},y_{-i}\in\R^{n-1}$.
This is fulfilled by any Lipschitz continuous function and linear function is obviously one of the members.
\end{remark}

To rectify the issue pointed out in Remark~\ref{rem:augm_str_mon} and, thus,to broaden the class of games under consideration, let us  relax Assumption~\ref{assum:str_monotone}. Namely, instead of the assumption on strongly monotone game mapping $\bF$, let us introduce the following assumption of \emph{restricted strongly monotone} game mapping.
\begin{assumption}\label{assum:restr_str_mon}
  The game mapping $\bF$ is restricted strongly monotone over $\R^n$ in respect to any Nash equilibrium $x^*\in\Om$ with the constant $\mu_r>0$, namely for any $x\in\R^n$
  \[\langle F(x) - F(x^*),x-x^*\rangle\ge \mu_r\|x-x^*\|^2_2.\]
\end{assumption}

Next, we show that under Assumption~\ref{assum:restr_str_mon} there exist settings for which the augmented mapping $\bFa$ is restricted strongly monotone in respect to the Nash equilibrium matrix $\bx^*$ with some positive constant.

\begin{lemma}\label{lem:augmap_restr}
  Under Assumptions~\ref{assum:Lipschitz}, \ref{assum:connected}, and \ref{assum:restr_str_mon}, the augmented mapping $\bFa(\bx)$ of the game $\Gamma(n,\{J_i\},\{\Om_i\},\Gra)$ is restricted strongly monotone over $\Oma$ in respect to any Nash equilibrium  matrix $\bx^*$ with the constant
  \begin{align*}
  \mura &= \min\{b_1,b_2\},\cr
  b_1 &= \min\left\{\alpha\left(\frac{\mu_r}{n} - \LF(\beta^2 + 2\beta)\right),\tilde\lambda_{\min}\{I-W\} \right\},\cr
  b_2 &= \frac{\tilde\lambda_{\min}\{I-W\}}{1+\frac{1}{\beta^2}} - \alpha \LF,
  \end{align*}
  where $\beta >0$ is a positive constant and $\LF=\max\limits_{i}\left\{\alpha\sqrt{L_i^2+L_{-i}^2}\right\}$.
\end{lemma}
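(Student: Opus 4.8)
The plan is to bound $\langle\bFa(\bx)-\bFa(\bx^*),\bx-\bx^*\rangle$ from below by $\mura\|\bx-\bx^*\|_\Fro^2$ for every $\bx\in\Oma$ and every Nash equilibrium matrix $\bx^*$, treating the ``consensus'' and the ``disagreement'' parts of the increment separately (throughout $\Lam=\alpha I$, as in the statement). Write $\delta\triangleq\bx-\bx^*$, let $\delta_\parallel=\one\bar\delta^{\T}$ be its consensual part (each column replaced by its mean) and $\delta_\perp\triangleq\delta-\delta_\parallel$ the disagreement part (each column has zero mean), so that $\|\delta\|_\Fro^2=\|\delta_\parallel\|_\Fro^2+\|\delta_\perp\|_\Fro^2$ and $\|\delta_\parallel\|_\Fro^2=n\|\di(\delta_\parallel)\|_2^2$; also set $d\triangleq\di(\bx)-x^*=\di(\delta)\in\R^n$, and note $\di(\delta_\parallel)=d-\di(\delta_\perp)$. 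Since $\bx^*$ is consensual, $(I-W)\bx^*=0$, hence
\[\langle\bFa(\bx)-\bFa(\bx^*),\delta\rangle=\langle(I-W)\delta,\delta\rangle+\langle\Lam(\tbF(\bx)-\tbF(\bx^*)),\delta\rangle.\]
For the first term, $(I-W)\delta_\parallel=0$ together with \eqref{eq:norm} and \eqref{eq:null} gives, after the cross term vanishes by symmetry of $I-W$, $\langle(I-W)\delta,\delta\rangle=\langle(I-W)\delta_\perp,\delta_\perp\rangle\ge\tilde\lambda_{\min}\{I-W\}\|\delta_\perp\|_\Fro^2$.

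For the second term, observe that a diagonal matrix pairs with $\delta$ only through $d=\di(\delta)$, and introduce the consensual auxiliary matrix $\bx'$ whose every row equals $\di(\bx)$, so that $\tbF(\bx')_{ii}=\nabla_iJ_i(\di(\bx))$ while $\tbF(\bx^*)_{ii}=\nabla_iJ_i(x^*)$. Splitting $\tbF(\bx)-\tbF(\bx^*)=(\tbF(\bx)-\tbF(\bx'))+(\tbF(\bx')-\tbF(\bx^*))$, the second piece equals exactly $\alpha\langle\bF(\di(\bx))-\bF(x^*),d\rangle\ge\alpha\mu_r\|d\|_2^2$ by Assumption~\ref{assum:restr_str_mon}. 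The first piece is a ``consensus error'': by Cauchy--Schwarz and the Lipschitz bound \eqref{eq:tbF} from the proof of Lemma~\ref{lem:augmap_LC}, $|\langle\Lam(\tbF(\bx)-\tbF(\bx')),\delta\rangle|\le\alpha\LF\|\bx-\bx'\|_\Fro\,\|d\|_2$; moreover $\bx-\bx'$ has zero diagonal and $(i,j)$-entry $(\delta_\perp)_{ij}-(\delta_\perp)_{jj}$, so a weighted triangle inequality with parameter $\beta$ bounds $\|\bx-\bx'\|_\Fro$ by a $\beta$-dependent multiple of $\|\delta_\perp\|_\Fro$.

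It remains to assemble the three estimates: apply Young's inequality with parameter $\beta$ to the cross term $\|\delta_\perp\|_\Fro\|d\|_2$, and then eliminate $d$ in favour of $\|\delta_\parallel\|_\Fro$ via $\di(\delta_\parallel)=d-\di(\delta_\perp)$ and $\|\delta_\parallel\|_\Fro^2=n\|\di(\delta_\parallel)\|_2^2$. This produces a lower bound of the form $c_\parallel\|\delta_\parallel\|_\Fro^2+c_\perp\|\delta_\perp\|_\Fro^2$ whose coefficients, once the elementary inequalities are tuned, reduce to $b_1$ and $b_2$, so that $\langle\bFa(\bx)-\bFa(\bx^*),\delta\rangle\ge\min\{b_1,b_2\}\|\delta\|_\Fro^2=\mura\|\delta\|_\Fro^2$. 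The main obstacle is precisely this assembly: the terms coupling $\delta_\parallel$ to $\delta_\perp$ (arising both from the consensus error and from the passage from $\|d\|_2^2$ to $\|\delta_\parallel\|_\Fro^2$) must be absorbed so that the residual quadratic form in $(\|\delta_\parallel\|_\Fro,\|\delta_\perp\|_\Fro)$ stays positive definite; this is exactly what forces $b_1>0$ and $b_2>0$, i.e.\ $\mu_r$ and the spectral gap $\tilde\lambda_{\min}\{I-W\}$ must dominate the Lipschitz coupling $\LF$, the same trade-off already seen in Lemma~\ref{lem:augmap_strmon}.
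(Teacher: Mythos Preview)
Your overall plan—split $\delta=\bx-\bx^*$ into consensual and disagreement parts, use restricted strong monotonicity on the consensual piece, Lipschitz control on the cross terms, and the spectral gap of $I-W$ on $\delta_\perp$—matches the paper's architecture. Two concrete choices, however, diverge from the paper and make your assembly step unlikely to yield the stated constants $b_1,b_2$.

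\textbf{Auxiliary point.} You take $\bx'$ with every row equal to $\di(\bx)$; the paper instead takes $\bc$, the orthogonal projection of $\bx$ onto the consensual subspace, so that $\bx-\bc=\delta_\perp$ and $\bc-\bx^*=\delta_\parallel$ exactly. With $\bc$, restricted strong monotonicity gives $\alpha\frac{\mu_r}{n}\|\delta_\parallel\|_\Fro^2$ immediately, and the four-way expansion of $\langle\Lam(\tbF(\bx)-\tbF(\bx^*)),\bx-\bx^*\rangle$ produces cross terms that involve only $\|\delta_\parallel\|_\Fro$ and $\|\delta_\perp\|_\Fro$, with no conversion needed. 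Your $\bx'$ instead produces $\|d\|_2^2$ and $\|\bx-\bx'\|_\Fro$; passing from these to $\|\delta_\parallel\|_\Fro,\|\delta_\perp\|_\Fro$ costs another layer of cross terms (through $\di(\delta_\perp)$, since $d=\di(\delta_\parallel)+\di(\delta_\perp)$) and an $O(\sqrt n)$ inflation in $\|\bx-\bx'\|_\Fro$. You can still close an argument this way, but the constant you obtain will not be the one in the lemma.

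\textbf{Case split versus Young's inequality.} The constants $b_1,b_2$ do \emph{not} arise as the two coefficients of a single quadratic form $c_\parallel\|\delta_\parallel\|_\Fro^2+c_\perp\|\delta_\perp\|_\Fro^2$. The paper instead distinguishes two regimes. If $\|\delta_\perp\|_\Fro\le\beta\|\delta_\parallel\|_\Fro$, it absorbs the entire cross contribution $-\alpha\LF\|\delta_\perp\|_\Fro(2\|\delta_\parallel\|_\Fro+\|\delta_\perp\|_\Fro)$ into the $\|\delta_\parallel\|_\Fro^2$ coefficient, obtaining $b_1$. If $\|\delta_\perp\|_\Fro\ge\beta\|\delta_\parallel\|_\Fro$, it discards the fine decomposition altogether, uses only the crude bound $\langle\Lam(\tbF(\bx)-\tbF(\bx^*)),\delta\rangle\ge-\alpha\LF\|\delta\|_\Fro^2$, and then recovers the fraction $\beta^2/(1+\beta^2)$ of $\|\delta\|_\Fro^2$ from $\tilde\lambda_{\min}\{I-W\}\|\delta_\perp\|_\Fro^2$, obtaining $b_2$. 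Notice that $b_2$ contains no $\mu_r$ at all—precisely because the strong-monotonicity term is thrown away in that regime. A single Young's inequality cannot reproduce this, so your claim that the coefficients ``reduce to $b_1$ and $b_2$'' is not correct as stated; to match the lemma's constants you need both the auxiliary point $\bc$ and the case split on the ratio $\|\delta_\perp\|_\Fro/\|\delta_\parallel\|_\Fro$.
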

\begin{proof}
As the set of all consensual matrices in $\R^{n\times n}$ is a linear subspace of $\R^{n\times n}$, for any estimation matrix $\bx\in\Oma$ we can consider the following linear decomposition:
\[\bx = \bc +\bn, \]
where $\bc\in\Oma$ is some consensual matrix and $\bn$ is some matrix from the subspace orthogonal to the subspace of all consensual matrices on $\Oma$, namely $\langle\bc,\bn\rangle = 0$. Since $\bx^*$ is a consensual matrix, $\langle\bc - \bx^*,\bn\rangle = 0$. Thus, we get
\begin{align}\label{eq:decom}
\|\bx-\bx^*\|^2_{\Fro} = \|\bc-\bx^*\|^2_{\Fro} + \|\bn\|^2_{\Fro}.
\end{align}
Let $c = \di(\bc)\in\R^n$ denote the vector in rows of the matrix $\bc$. Due to Assumption~\ref{assum:restr_str_mon} and given $\alpha_i=\alpha$ for all $i\in[n]$, we obtain
\begin{align}\label{eq:part1}
 \langle&\Lam(\tbF(\bc) - \tbF(\bx^*)), \bc - \bx^*\rangle \cr
& = \sum_{i=1}^{n}\alpha_i(\nabla_i J_i(c)-\nabla_i J_i(x^*))(c_i - x^*_i)\cr
&\ge\alpha\langle\bF(c) -\bF(x^*),c - x^*\rangle\cr
&\ge\alpha\mu_r\|c-x^*\|^2_2 = \alpha\frac{\mu_r}{n}\|\bc - \bx^*\|^2_{\Fro}.
\end{align}
Since $\tbF$ is Lipschitz continuous on $\Oma$ with the constant $\LF=\max\limits_{i}\left\{\alpha\sqrt{L_i^2+L_{-i}^2}\right\}$ (see equation \eqref{eq:tbF} in the proof of Lemma~\ref{lem:augmap_LC}), we conclude that
 \begin{subequations}\label{eq:part}
 \begin{align}
  \langle\Lam(\tbF(\bc) - \tbF(\bx^*)), \bx - \bc\rangle\ge&-\alpha\LF\|\bc-\bx^*\|_{\Fro}\|\bx-\bc\|_{\Fro}\cr
 =&-\alpha\LF\|\bc-\bx^*\|_{\Fro}\|\bn\|_{\Fro}, \label{eq:part2}\\
   \langle\Lam(\tbF(\bx) - \tbF(\bc)), \bc - \bx^*\rangle\ge&-\alpha\LF\|\bx-\bc\|_{\Fro}\|\bc-\bx^*\|_{\Fro}, \label{eq:part3}\\
 \langle\Lam(\tbF(\bx) - \tbF(\bc)), \bx - \bc\rangle\ge&-\alpha\LF\|\bx-\bc\|^2_{\Fro}\cr
  &\qquad\qquad=-\alpha\LF\|\bn\|^2_{\Fro}, \label{eq:part4}
 \end{align}
 \end{subequations}
Next, taking into account inequalities \eqref{eq:part1} and \eqref{eq:part}, we obtain
\begin{align}\label{eq:tbFx}
   &\langle\Lam(\tbF(\bx) - \tbF(\bx^*)), \bx - \bx^*\rangle = \langle\Lam(\tbF(\bc) - \tbF(\bx^*)), \bc - \bx^*\rangle\cr
    & + \langle\Lam(\tbF(\bc) - \tbF(\bx^*)), \bx - \bc\rangle + \langle\Lam(\tbF(\bx) - \tbF(\bc)), \bc - \bx^*\rangle\cr
    &\qquad\qquad\qquad\qquad\qquad +  \langle\Lam(\tbF(\bx) - \tbF(\bc)), \bx - \bc\rangle\cr
    &\ge\alpha\left(\frac{\mu_r}{n}\|\bc - \bx^*\|^2_{\Fro} - \LF\|\bn\|_{\Fro}(2\|\bc-\bx^*\|_{\Fro} + \|\bn\|_{\Fro})\right).
\end{align}
Moreover, due to \eqref{eq:decom},
\begin{align}\label{eq:I-W}
\langle (I-W)&(\bx - \bx^*), \bx - \bx^*\rangle\ge \tilde\lambda_{\min}\{I-W\}\|\bx - \bx^*\|^2_{\Fro}\cr
 &= \tilde\lambda_{\min}\{I-W\}(\|\bc - \bx^*\|^2_{\Fro} + \|\bn\|^2_{\Fro}) \cr
 &\qquad\qquad\qquad= \tilde\lambda_{\min}\{I-W\} \|\bn\|^2_{\Fro},
\end{align}
since $(I-W)(\bc - \bx^*) = 0$ (see \eqref{eq:null}). Summing inequalities \eqref{eq:tbFx} and \eqref{eq:I-W}, we conclude that
\begin{align}\label{eq:Fa1}
 &\langle \bFa(\bx) - \bFa(\bx^*), \bx - \bx^*\rangle \ge \tilde\lambda_{\min}\{I-W\} \|\bn\|^2_{\Fro} \cr
&+ \alpha\left(\frac{\mu_r}{n}\|\bc - \bx^*\|^2_{\Fro} - \LF\|\bn\|_{\Fro}(2\|\bc-\bx^*\|_{\Fro} + \|\bn\|_{\Fro})\right).\cr
 \end{align}
 Let us fix some positive $\beta>0$.

 (i) If $\|\bn\|_{\Fro}\le\beta\|\bc-\bx^*\|_{\Fro}$, then, according to \eqref{eq:Fa1} and \eqref{eq:decom},
 \begin{align}\label{eq:Fa2}
 &\langle \bFa(\bx) - \bFa(\bx^*), \bx - \bx^*\rangle \ge \tilde\lambda_{\min}\{I-W\} \|\bn\|^2_{\Fro} \cr
&\qquad+ \alpha\left(\frac{\mu_r}{n} - \LF(\beta^2 + 2\beta)\right)\|\bc-\bx^*\|^2_{\Fro}\cr
&\qquad\qquad\qquad\qquad\qquad\qquad\ge b_1\|\bx-\bx^*\|^2_{\Fro},
 \end{align}
 where
 \[b_1 = \min\left\{\alpha\left(\frac{\mu_r}{n} - \LF(\beta^2 + 2\beta)\right),\tilde\lambda_{\min}\{I-W\} \right\}.\]

(ii) If $\|\bn\|_{\Fro}\ge\beta\|\bc-\bx^*\|_{\Fro}$, then, according to \eqref{eq:Fa1} and \eqref{eq:decom} and since $\tbF$ is Lipschitz continuous with the constant $\LF$, we obtain
 \begin{align}\label{eq:Fa3}
 &\langle \bFa(\bx) - \bFa(\bx^*), \bx - \bx^*\rangle \ge \tilde\lambda_{\min}\{I-W\} \|\bn\|^2_{\Fro} \cr
&\qquad\qquad\qquad\qquad\qquad\qquad-\alpha \LF\|\bx-\bx^*\|^2_{\Fro}\cr
&\qquad\qquad\qquad\qquad\qquad\quad=-\alpha \LF\|\bx-\bx^*\|^2_{\Fro}\cr
&\qquad\qquad +\tilde\lambda_{\min}\{I-W\} \left(\frac{\beta^2\|\bn\|^2_{\Fro}}{1+\beta^2} +  \frac{\|\bn\|^2_{\Fro}}{1+\beta^2}\right)\cr
&\qquad\qquad\qquad\qquad\qquad\quad\ge-\alpha \LF\|\bx-\bx^*\|^2_{\Fro}\cr
&\qquad\qquad +\tilde\lambda_{\min}\{I-W\} \left(\frac{\beta^2\|\bn\|^2_{\Fro}}{1+\beta^2} +  \frac{\beta^2\|\bc-\bx^*\|^2_{\Fro}}{1+\beta^2}\right)\cr
&\qquad\qquad\qquad\qquad\qquad\quad = b_2\|\bx-\bx^*\|^2_{\Fro},
 \end{align}
 where
 \[b_2 = \frac{\tilde\lambda_{\min}\{I-W\}}{1+\frac{1}{\beta^2}} - \alpha \LF.\]

 Thus, combining \eqref{eq:Fa2} and \eqref{eq:Fa3}, we conclude that
 \[\mura=\min\{b_1,b_2\}.\]
\end{proof}

\begin{remark}\label{rem:restr}
Given any game $\Gamma(n,\{J_i\},\{\Om_i\},\Gra)$, for which Assumptions~\ref{assum:Lipschitz}, \ref{assum:connected}, and \ref{assum:restr_str_mon} hold, there exist settings of the augmented mapping $\bFa$ with $\Lam=\Diag(\alpha,\cdots,\alpha)$, $\alpha>0$, such that $\bFa$ is restricted strongly monotone over $\Oma$ in respect to any Nash equilibrium  matrix $\bx^*$ with the positive constant $\mura>0$. Indeed, according to Lemma~\ref{lem:augmap_restr}, we can choose $\beta$ such that $b_1 = \min\left\{\alpha\left(\frac{\mu_r}{n} - \LF(\beta^2 + 2\beta)\right),\tilde\lambda_{\min}\{I-W\} \right\}$ is positive. It holds, for example, if
\[\beta^2 + 2\beta = \frac{\mu_r}{2n\LF}.\]
Note that as $\frac{\mu_r}{2n\LF}>0$, there exists some positive $\beta>0$ solving the equation above.
Further, based on the choice of $\beta>0$, we can proceed with settings for $\alpha>0$. We these settings we need to guarantee that $b_2=\frac{\tilde\lambda_{\min}\{I-W\}}{1+\frac{1}{\beta^2}} - \alpha \LF>0$. It holds, for example, if
\[ \alpha = \frac{\tilde\lambda_{\min}\{I-W\}}{2\LF(1+\frac{1}{\beta^2})}. \]
\end{remark}

\section{Gradient Algorithms with Geometric Rates}\label{sec:algorithms}
\subsection{Gradient and Accelerated Gradient Algorithms for Strongly Monotone Variational Inequalities}\label{subsec:Nesterov}
In this subsection we summarize the results presented in \cite{Nesterov} that allow us to set up distributed procedures with geometric convergence to the Nash Equilibrium in games with strongly monotone mappings.

The paper \cite{Nesterov} deals with the following general settings. Let us consider some finite-dimensional real vector space $E$, the dual space $E^*$. The scalar product
of $x\in E$ and $s\in E^*$ is denoted by $\langle s,x\rangle$. The Euclidean norms are defined by a positive definite and self-adjoint operator $B: E\to\E^*$ as follows: 
\begin{align}\label{eq:Boper}
 \|x\| &= \langle Bx,x\rangle^{1/2}, \quad x\in E,\cr
 \|s\|_{*} &=\langle s, B^{-1}s \rangle^{1/2}, \quad s\in E^*.
\end{align}
Let $Q\subseteq E$ be some convex closed subset of $E$ and $g:Q\to E^*$ be some operator on $Q$. The goal is to solve the corresponding variational inequality problem $VI(Q,g)$:
\[\mbox{Find }x^* \in Q: \quad \langle g(x^*), x-x^*\rangle \ge 0 \mbox{ for all $x \in Q$}\footnote{see also Definition~\ref{def:VI} for the special case $E=E^*=\R^d$.}.\]
Let us introduce $\gamma = \frac{L}{\mu}\ge 1$, which is the condition number of the operator $g$.

The standard iterative gradient-type method for solving the inequality above can be formulated as follows:
\begin{align}\label{eq:Nest_GR}
x^0 &= x \in Q,\cr
x^{k+1} &= \pro{Q}{x_k -\lambda B^{-1}g(x_k)},\quad k\ge0,
\end{align}
where $B$ is an operator defining the Euclidean norms in the spaces $E$ and $E^*$ (see \eqref{eq:Boper}).
In the case of strongly monotone and Lipschitz continuous operator $g$ and given an optimal setting of the stepsize $\lambda$, the procedure above converges to the unique solution of $VI(Q,g)$ with a geometric rate.
\begin{theorem}\label{th:geom}
  (\cite{Nesterov}) Let the operator $g$ be continuous, strongly monotone with some constant $\mu>0$, and Lipschitz continuous with some constant $L>0$.
  Let $x^*$ be the unique solution of $VI(Q,g)$, where $Q$ is convex and closed. Then, given the optimal stepsize $\lambda = \frac{\mu}{L^2}$, the following holds for the gradient algorithm \eqref{eq:Nest_GR}:
  \[\|x^k-x^*\|^2\le \exp\left\{-\frac{k}{\gamma^2}\right\}.\]
\end{theorem}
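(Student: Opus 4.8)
The plan is to show that iteration~\eqref{eq:Nest_GR} is a contraction in the $B$-norm $\|\cdot\|$ towards $x^*$, and then to iterate. First I would record the fixed-point reformulation of the variational inequality: a point $\bar x\in Q$ satisfies $\bar x\in SOL(Q,g)$ if and only if $\bar x=\pro{Q}{\bar x-\lambda B^{-1}g(\bar x)}$ for every $\lambda>0$. This follows from the characterization of the projection onto the convex closed set $Q$ in the inner product $\langle B\cdot,\cdot\rangle$, namely $\langle B(\pro{Q}{y}-y),z-\pro{Q}{y}\rangle\ge 0$ for all $z\in Q$; applied at $y=\bar x-\lambda B^{-1}g(\bar x)$ it reduces exactly to $\lambda\langle g(\bar x),z-\bar x\rangle\ge 0$. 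In particular $x^*$ is a fixed point of the map defining~\eqref{eq:Nest_GR}.

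Next I would use non-expansiveness of $\pro{Q}{\cdot}$ in the $B$-norm together with this fixed-point identity:
\begin{align*}
\|x^{k+1}-x^*\|^2 &= \left\|\pro{Q}{x^k-\lambda B^{-1}g(x^k)} - \pro{Q}{x^*-\lambda B^{-1}g(x^*)}\right\|^2\cr
&\le \left\|(x^k-x^*) - \lambda B^{-1}\bigl(g(x^k)-g(x^*)\bigr)\right\|^2.
\end{align*}
Expanding the square via $\|v\|^2=\langle Bv,v\rangle$ and using the identities $\langle BB^{-1}s,v\rangle=\langle s,v\rangle$ and $\|B^{-1}s\|^2=\langle s,B^{-1}s\rangle=\|s\|_*^2$, I obtain
\[\|x^{k+1}-x^*\|^2 \le \|x^k-x^*\|^2 - 2\lambda\langle g(x^k)-g(x^*),\,x^k-x^*\rangle + \lambda^2\|g(x^k)-g(x^*)\|_*^2.\]

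Then I would lower-bound the cross term by strong monotonicity, $\langle g(x^k)-g(x^*),x^k-x^*\rangle\ge\mu\|x^k-x^*\|^2$, and upper-bound the last term by Lipschitz continuity, $\|g(x^k)-g(x^*)\|_*\le L\|x^k-x^*\|$, which yields the one-step estimate $\|x^{k+1}-x^*\|^2\le(1-2\lambda\mu+\lambda^2 L^2)\|x^k-x^*\|^2$. The scalar quadratic $q(\lambda)=1-2\lambda\mu+\lambda^2L^2$ attains its minimum at $\lambda=\mu/L^2$, where $q(\mu/L^2)=1-\mu^2/L^2=1-\gamma^{-2}$. Iterating the contraction and using $1-t\le e^{-t}$ with $t=\gamma^{-2}$ gives $\|x^k-x^*\|^2\le(1-\gamma^{-2})^k\|x^0-x^*\|^2\le\exp\{-k/\gamma^2\}\|x^0-x^*\|^2$, which is the asserted bound (the displayed form corresponds to normalizing $\|x^0-x^*\|\le 1$, or to absorbing this constant into the estimate).

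There is no genuine obstacle here: this is the classical convergence analysis of the projected gradient method for strongly monotone, Lipschitz variational inequalities, and since the statement is quoted verbatim from \cite{Nesterov} one may alternatively just cite it. The only points demanding care are that all norm manipulations be carried out consistently with the dual pair $(\|\cdot\|,\|\cdot\|_*)$ generated by $B$ in~\eqref{eq:Boper}, so that the Lipschitz constant is measured in $\|\cdot\|_*$ exactly as hypothesized, and the projection fixed-point characterization of $x^*$, which is where the convexity and closedness of $Q$ enter.
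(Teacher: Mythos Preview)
The paper does not supply its own proof of this theorem; it simply cites it from \cite{Nesterov}. Your argument is the standard one and is correct, and in fact the paper reproduces essentially the same steps (fixed-point characterization of $x^*$, non-expansiveness of the projection, expansion of the square, strong monotonicity plus Lipschitz continuity, then optimizing $\lambda$) when it proves the closely related Theorem~\ref{th:geom_GRANEr} for the restricted strongly monotone case.
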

According to Theorem~\ref{th:geom}, the convergence rate of the procedure \eqref{eq:Nest_GR} depends on the squared condition number $\gamma^2$. As $\gamma>1$, the convergence rate would be faster, if the constant factor $\gamma^2$ is replaced by $\gamma$. To obtain this better dependence of the rate on the condition number, namely to accelerate the algorithm \eqref{eq:Nest_GR}, the paper \cite{Nesterov} follows the idea of the Nesterov type acceleration in optimization of strongly convex functions with Lipshitz continuous gradients (see \cite{Nesterov_Lectures}, Section~2.2). For this purpose the following merit function on $Q$ needs to be introduced:
\[f(x) = \sup_{y\in Q}\{\langle g(y),x-y\rangle + 0.5\mu\|x-y\|^2\}.\]
It is shown in \cite{Nesterov} that the function $f$ is strongly convex on $Q$ with the constant $\mu$. Moreover, it is non-negative on $Q$ and is equal to $0$ only at the unique solution of $VI(Q,g)$. Thus, $x^* = SOL(Q,g)$ is equal to the unique minimum of $f$ on $Q$. Further, analogously to the construction of optimization methods with optimal convergence rates presented in Section~2.2 of \cite{Nesterov_Lectures}, the work \cite{Nesterov} provides the estimate sequence of the function $f$. For construction of the estimate sequence and the accelarated algorithm the authors use the sequence of positive weights $\{\lambda^t\}$, $S^k = \sum_{t=0}^{k}\lambda^t$, and the following functions:
\begin{align}\label{eq:psi}
  \psi_y^{\beta} (x) &= \langle g(y), y - x\rangle - 0.5\beta\|x-y\|^2, \quad \mbox{($\beta>0$)},\cr
  \Psi^k(x) &= \sum_{t=0}^{k}\lambda^t\psi_{y^t}^{\mu} (x),
\end{align}
where $y^t$ is some sequence generated by the  corresponding  iterative process.
The accelerated algorithm is based on the definitions above and can be presented by the following iterations:
\begin{align}\label{eq:Nest_accGR}
\lambda^0 &= 1, \, y^0\in Q, \cr
x^k &= \arg \max_{x\in Q} \Psi^k(x),  \cr
y^{k+1} &= \arg \max_{x\in Q} \psi_{x^k}^{L} (x), \cr
\lambda^{k+1}& = \frac{1}{\gamma}\cdot S^k, \quad k\ge 0.
\end{align}
The following result holds for the procedure above.
\begin{theorem}\label{th:accgeom}
  (\cite{Nesterov}, Theorem 3) 
  Let the operator $g$ be continuous, strongly monotone with some constant $\mu>0$, and Lipschitz continuous with some constant $L>0$.
  Let $x^*$ be the unique solution of $VI(Q,g)$, where $Q$ is convex and closed. Let the output of the algorithm \eqref{eq:Nest_accGR} be
  $\tilde y^k = \frac{1}{S^k}\sum_{t=0}^{k}\lambda^t y^t$. Then
  \[0.5\mu\|\tilde y^k-x^*\|^2\le f(y^0)\cdot\gamma^2\exp\left\{-\frac{k}{\gamma+1}\right\}.\]
\end{theorem}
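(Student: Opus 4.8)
Since Theorem~\ref{th:accgeom} is quoted verbatim from \cite{Nesterov} (Theorem~3), the plan is to reproduce its estimate-sequence argument in the present notation. Throughout, set $\gamma=L/\mu\ge 1$ and observe that the weight recursion $\lambda^0=1$, $\lambda^{k+1}=\tfrac1\gamma S^k$ yields $S^{k+1}=(1+\tfrac1\gamma)S^k$, hence $S^k=(1+\tfrac1\gamma)^k$. I will also use the properties of the merit function $f$ already recorded before the statement: $f$ is $\mu$-strongly convex on $Q$, nonnegative, vanishes only at $x^*$, and satisfies $f(x)=\sup_{y\in Q}\{-\psi_y^{\mu}(x)\}$.

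The first step is a lower bound on $\Psi^k(x^*)$. For each $t$, strong monotonicity of $g$ combined with the variational inequality $\langle g(x^*),y^t-x^*\rangle\ge 0$ gives $\langle g(y^t),y^t-x^*\rangle\ge\mu\|y^t-x^*\|^2$, hence $\psi_{y^t}^{\mu}(x^*)=\langle g(y^t),y^t-x^*\rangle-0.5\mu\|y^t-x^*\|^2\ge 0.5\mu\|y^t-x^*\|^2\ge 0$. Multiplying by $\lambda^t$, summing over $t=0,\dots,k$, and applying Jensen's inequality to the convex map $z\mapsto\|z-x^*\|^2$ with weights $\lambda^t/S^k$, I obtain $\Psi^k(x^*)\ge 0.5\mu S^k\|\tilde{y}^k-x^*\|^2$, where $\tilde{y}^k=\tfrac1{S^k}\sum_{t=0}^k\lambda^t y^t$ is exactly the output in the statement. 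Since $x^k=\arg\max_{x\in Q}\Psi^k(x)$, this gives $0.5\mu S^k\|\tilde{y}^k-x^*\|^2\le\Psi^k(x^*)\le\Psi^k_*$, where $\Psi^k_*\triangleq\max_{x\in Q}\Psi^k(x)$.

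The crux is the upper bound $\Psi^k_*\le\gamma^2 f(y^0)$ for all $k$, which I would prove by induction. For $k=0$, $\Psi^0=\psi_{y^0}^{\mu}$ and $\Psi^0_*$ is at most its unconstrained maximum $\tfrac1{2\mu}\|g(y^0)\|_*^2$; a direct estimate using Lipschitz continuity of $g$, the variational inequality at $x^*$, and $f(y^0)\ge 0.5\mu\|y^0-x^*\|^2$ then gives $\Psi^0_*\le\gamma^2 f(y^0)$ (immediate when $Q=E$, since then $g(x^*)=0$). For the inductive step, write $\Psi^{k+1}=\Psi^k+\lambda^{k+1}\psi_{y^{k+1}}^{\mu}$; since $\Psi^k$ is a strongly concave quadratic and $x^k$ is optimal over $Q$, we have $\Psi^k(x)\le\Psi^k_*-0.5S^k\mu\|x-x^k\|^2$ for all $x\in Q$. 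Then, using that $y^{k+1}=\arg\max_{x\in Q}\psi_{x^k}^{L}(x)=\pro{Q}{x^k-\tfrac1L B^{-1}g(x^k)}$ is a gradient step from $x^k$, together with its first-order optimality condition, the monotonicity inequality $\langle g(y^{k+1})-g(x^k),y^{k+1}-x^k\rangle\ge 0$, and Lipschitz continuity of $g$, one lower-bounds $\psi_{y^{k+1}}^{\mu}(x)$ by a quadratic centred at $x^k$; maximizing the sum over $x\in Q$ and invoking the tuning $\lambda^{k+1}=S^k/\gamma=S^k\mu/L$ makes the cross terms cancel so that $\Psi^{k+1}_*\le\gamma^2 f(y^0)$ is preserved. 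I expect this inductive step — closing the algebra of the cross terms and handling the optimality of $x^k$ and $y^{k+1}$ as variational inequalities on $Q$ rather than as gradient equations — to be the main obstacle; it is the delicate heart of Nesterov's estimate-sequence technique.

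Finally, combining the two halves gives $0.5\mu\|\tilde{y}^k-x^*\|^2\le\Psi^k_*/S^k\le\gamma^2 f(y^0)\,(1+\tfrac1\gamma)^{-k}$, and since $\ln(1+\tfrac1\gamma)\ge\tfrac1{\gamma+1}$ we get $(1+\tfrac1\gamma)^{-k}\le\exp\{-k/(\gamma+1)\}$, which is precisely the asserted bound. The remaining details are as in \cite{Nesterov}.
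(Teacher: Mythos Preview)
The paper does not prove Theorem~\ref{th:accgeom}: it is quoted verbatim as Theorem~3 of \cite{Nesterov} and then invoked as a black box to obtain Theorem~\ref{th:geom_Acc-GRANE}. There is therefore no ``paper's own proof'' to compare your proposal against; your reconstruction of the estimate-sequence argument already goes well beyond what the paper itself supplies.

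As for the sketch on its own merits: the lower bound $\Psi^k(x^*)\ge 0.5\mu S^k\|\tilde y^k-x^*\|^2$ via strong monotonicity and Jensen is clean and correct, as is the closing step $(1+1/\gamma)^{-k}\le\exp\{-k/(\gamma+1)\}$. The inductive upper bound $\Psi^{k+1}_*\le\gamma^2 f(y^0)$ is, as you yourself flag, the real work, and you have only outlined how the optimality conditions at $x^k$ and $y^{k+1}$ together with the choice $\lambda^{k+1}=S^k/\gamma$ should make the cross terms cancel; that algebra is not carried out here, and the base case on a general closed convex $Q$ (where one cannot simply use $g(x^*)=0$) is also left implicit. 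So the proposal is a faithful roadmap of Nesterov's argument rather than a complete proof, but since the paper defers entirely to \cite{Nesterov}, that is already more than is required in context.
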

Thus, according to Theorems~\ref{th:geom} and \ref{th:accgeom}, for large values of $\gamma$, the convergence rate of the accelerated procedure \eqref{eq:Nest_accGR} is much better than the convergence rate of the gradient method \eqref{eq:Nest_GR}, while both algorithms have the same implementation complexity.
In the next section we apply the results above to set up and accelerate distributed Nash equilibrium seeking in the case of strongly monotone game mappings.

\subsection{Distributed Algorithms for Nash Equilibrium Seeking}
Recall that in Proposition~\ref{prop:opt} we obtained the following result for the game $\Gamma(n,\{J_i\},\{\Om_i\},\Gra)$:
\begin{align}\label{eq:VIforDNES}
  x^*\mbox{ is a Nash Equilibrium } \Leftrightarrow \, \langle\bFa(\bx^*), &\bx - \bx^*\rangle \ge 0, \, \cr&\forall \bx\in\Oma,
\end{align}
where $\bFa$ is defined in \eqref{eq:augmap} and $\bx^*$ is the Nash equilibrium matrix.
Moreover, it was shown in Lemmas~\ref{lem:augmap_LC} and \ref{lem:augmap_strmon} that under Assumptions~\ref{assum:str_monotone} and \ref{assum:Lipschitz} there are settings for the parameters $\alpha_i$, $i\in[n]$, such that the mapping $\bFa$ is Lipschitz continuous and strongly monotone on $\Oma$ with the constants $\LFa$ and $\muFa$ respectively (see also Remark~\ref{rem:augm_str_mon}). Thus, we can apply the results from \cite{Nesterov} discussed in the preceding section to distributed Nash equilibrium seeking in the game $\Gamma(n,\{J_i\},\{\Om_i\},\Gra)$.


We start with adapting the process \eqref{eq:Nest_GR} to $VI(\Oma,\bFa)$ in \eqref{eq:VIforDNES} with the mapping $\bFa(\bx) = (I-W)\bx + \Lam\tbF(\bx)$. Analogous to notations in Subsection~\ref{subsec:Nesterov}, we use the upper index to denote the iteration index.  By writing down the $k$th iteration for each row in the estimation matrix $\bx^k$ we get the following \textbf{Algorithm 1}.
\begin{center}
  {\textbf{Algorithm 1: GRANE}}
  \smallskip

    \begin{tabular}{l}
    \hline
    \emph{  } Set mixing matrix $W$; Choose step size $\lambda>0$;\\
    \emph{  } All players $i\in[n]$ pick arbitrary initial $x_{(i)}^0\in\R^{n}$,\\
    \emph{  } where the $i$th coordinate of $x_{(i)}^0$ is from $\Om_i$;\\
    \emph{  } \textbf{for} $k=0,1,\ldots$, all players $i\in[n]$ do \\
    \emph{~ } $x_{i}^{k+1}=\mathcal{P}_{\Om_i}\{(1-\lambda+\lambda w_{ii})x_i^k+\sum\limits_{j\in\N_{i}}\lambda w_{ij} \tx_{(j)i}^k $\\
    \emph{  } \qquad\qquad\qquad\qquad\qquad\qquad\quad $-\lambda\alpha_i\nabla_i J_i(x_i^k,\tx_{-i}^k)\}$;\\
    \emph{~ } \textbf{for} $l=\{1,\cdots,i-1,i+1,\cdots,n\}$ \\
    \emph{  } \quad$\tx_{(i)l}^{k+1}=\tx_{(i)l}^{k}+\sum\limits_{j\in\N_{i}} \lambda w_{ij} \tx_{(j)l}^k$;\\
    \emph{~ } \textbf{end for};\\
    \emph{  } \textbf{end for}.\\
    \hline
    \end{tabular}
\end{center}
As we can see, to run GRANE, agents start with arbitrary estimations of the joint action, namely, they do not need any prior information on the action sets of other players in the game. As time runs, agents exchange the information about their current estimations with their neighbors over the communication graph with the mixing matrix $W$ and update their action based on local gradient steps. Thus, GRANE is a distributed gradient algorithm. Moreover, due to the relation \eqref{eq:VIforDNES}, Lemmas~\ref{lem:augmap_LC}, \ref{lem:augmap_strmon}, and Theorems~\ref{th:exist}, \ref{th:geom}, the following theorem can be formulated.

\begin{theorem}\label{th:geom_GRANE}
Let us consider the game $\Gamma(n,\{J_i\},\{\Om_i\},\Gra)$ for which Assumptions~\ref{assum:convex}-\ref{assum:connected} hold. Then there exists the unique Nash equilibrium $x^*$ in $\Gamma$. Moreover, given $\alpha_i$, $i\in[n]$, such that $\muFa>0$, and the stepsize $\lambda = \frac{\muFa}{\LFa^2}$, the estimation matrix sequence $\{\bx^k\}$ generated by the algorithm GRANE satisfies
  \[\|\bx^k-\bx^*\|_{\Fro}^2\le \exp\left\{-\frac{k}{\gamma_{\bFa}^2}\right\},\]
  where $\gamma_{\bFa} = \frac{\LFa}{\muFa}$ and $\bx^*$ is the Nash equilibrium matrix.
\end{theorem}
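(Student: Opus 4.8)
The plan is to recognize the iteration GRANE as a special case of the projected gradient method \eqref{eq:Nest_GR} for the variational inequality $VI(\Oma,\bFa)$ and then invoke Theorem~\ref{th:geom}. Existence and uniqueness of the Nash equilibrium $x^*$ follow at once from Theorem~\ref{th:exist} (which uses only Assumptions~\ref{assum:convex}--\ref{assum:str_monotone}). By Proposition~\ref{prop:opt}, equivalently \eqref{eq:VIforDNES}, $x^*$ is the Nash equilibrium if and only if the associated consensual Nash equilibrium matrix $\bx^*\in\Oma$, with $\di(\bx^*)=x^*$, solves $VI(\Oma,\bFa)$ for $\bFa(\bx)=(I-W)\bx+\Lam\tbF(\bx)$ as in \eqref{eq:augmap}. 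Hence it suffices to show (a) that $\{\bx^k\}$ generated by GRANE is precisely the gradient-play sequence $\bx^{k+1}=\pro{\Oma}{\bx^k-\lambda\bFa(\bx^k)}$, and (b) that $\bFa$ and $\Oma$ satisfy the hypotheses of Theorem~\ref{th:geom}.

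For (a), I would work in the space $E=E^*=\R^{n\times n}$ endowed with the Frobenius inner product, so that the operator $B$ in \eqref{eq:Boper} is the identity, $\|\cdot\|=\|\cdot\|_\Fro$, and $\pro{\Oma}{\cdot}$ is the Euclidean projection onto $\Oma$. The set $\Oma=\{\bx\in\R^{n\times n}:\di(\bx)\in\Om_1\times\cdots\times\Om_n\}$ constrains only the diagonal and does so coordinatewise, so the projection decouples: it acts as $\mathcal P_{\Om_i}$ on the $(i,i)$ entry and as the identity on every off-diagonal entry. Writing $\bx^k-\lambda\bFa(\bx^k)$ entrywise and using $((I-W)\bx)_{il}=\bx_{il}-\sum_{j}w_{ij}\bx_{jl}$, $w_{ij}=0$ for $j\notin\N_i\cup\{i\}$, together with the fact that $\tbF(\bx)$ is diagonal with $\tbF(\bx)_{ii}=\nabla_i J_i(x_{(i)})$, the $(i,i)$ entry becomes $(1-\lambda+\lambda w_{ii})x_i^k+\sum_{j\in\N_i}\lambda w_{ij}\tx_{(j)i}^k-\lambda\alpha_i\nabla_i J_i(x_i^k,\tx_{-i}^k)$, while the off-diagonal entries undergo only the consensus mixing induced by $I-W$. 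After applying $\mathcal P_{\Om_i}$ on the diagonal, this is exactly the update prescribed in Algorithm~1, which establishes (a).

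For (b), $\Oma$ is closed and convex, being the preimage of the closed convex set $\Om_1\times\cdots\times\Om_n$ (Assumption~\ref{assum:convex}) under the linear map $\di(\cdot)$. By Lemma~\ref{lem:augmap_LC}, $\bFa$ is Lipschitz continuous on $\Oma$ with constant $\LFa=\LLF+\smax{I-W}$ (in particular continuous). By Lemma~\ref{lem:augmap_strmon}, under Assumptions~\ref{assum:str_monotone} and \ref{assum:connected} and for any choice of $\alpha_i$ for which $\muFa=\min\{a_1,a_2\}>0$ (which exists precisely when $\mu_{\bF}>\sqrt{n-1}\,L_{-i}$ for all $i$, cf.\ Remark~\ref{rem:augm_str_mon}), $\bFa$ is strongly monotone on $\Oma$ with constant $\muFa$. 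Therefore Theorem~\ref{th:geom} applies with $Q=\Oma$, $\mu=\muFa$, $L=\LFa$, $\gamma=\gamma_{\bFa}=\LFa/\muFa$, and the optimal stepsize $\lambda=\muFa/\LFa^2$, and yields $\|\bx^k-\bx^*\|_\Fro^2\le\exp\{-k/\gamma_{\bFa}^2\}$, as claimed.

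The main obstacle is step (a): carefully matching the compact matrix recursion $\pro{\Oma}{\bx^k-\lambda\bFa(\bx^k)}$ to the per-player, per-coordinate form of Algorithm~1, and in particular justifying that the projection onto $\Oma$ respects this decomposition (only the diagonal entries meet a nontrivial constraint, so $\pro{\Oma}{\cdot}$ equals the product of $\mathcal P_{\Om_i}$ on the diagonal with the identity off it). Everything else — the VI reformulation, the Lipschitz and strong-monotonicity estimates, the existence and uniqueness of $x^*$, and the geometric-rate bound — is supplied by Proposition~\ref{prop:opt}, Lemmas~\ref{lem:augmap_LC}--\ref{lem:augmap_strmon}, Theorem~\ref{th:exist}, and Theorem~\ref{th:geom}.
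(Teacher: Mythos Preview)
Your proposal is correct and follows essentially the same approach as the paper: the paper does not give a standalone proof of this theorem but simply notes (just before the statement) that it follows from \eqref{eq:VIforDNES}, Lemmas~\ref{lem:augmap_LC} and \ref{lem:augmap_strmon}, and Theorems~\ref{th:exist} and \ref{th:geom}. You have supplied exactly these ingredients, and in fact added the explicit verification that Algorithm~1 coincides entrywise with the projected-gradient iteration $\bx^{k+1}=\pro{\Oma}{\bx^k-\lambda\bFa(\bx^k)}$ (including the decoupling of the projection onto $\Oma$), which the paper leaves implicit.
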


To accelerate GRANE, we apply the updates in \eqref{eq:Nest_accGR} to the augmented game mapping $\bFa$. To set up the accelerated procedure, agents need to update not only the matrix $\bx^k$, but also an auxiliary estimation matrix $\by^k$. The matrix $\by^0$ is initialized in $\Oma$.
Note that, according to the definition of the functions $\psi$ and $\Psi$ in \eqref{eq:psi}, the updates of $x^k$ and $y^{k+1}$ in \eqref{eq:Nest_accGR} are equivalent to
\begin{align*}
  x^k = & \pro{Q}{\frac{1}{S^k}\sum_{i=0}^{k}\left[\lambda^iy^i - \frac{\lambda^i}{\mu}g(y^i)\right]}\cr
  y^{k+1} = &\pro{Q}{x^k - \frac{1}{L}g(x^k)},
\end{align*}
where, as before, $\lambda^0 = 1$, $S^k = \sum_{t=0}^{k}\lambda^t$, $\lambda^{k+1} = \frac{S^k}{\gamma}$, $\mu$ and $L$ are the strong monotonicity and Lipschitz continuity constants of the mapping $g$ respectively, and $\gamma = \frac{L}{\mu}$. Thus, we conclude that the matrices $\bx^k$ and $\by^k$ in the accelerated procedure applied to the Nash equilibrium seeking problem in the game $\Gamma(n,\{J_i\},\{\Om_i\},\Gra)$ are updated as follows:
\begin{equation}
\begin{array}{l}
\by^0\in\Oma, \, \lambda^0 = 1, \, k =0,1,\ldots;\\
S^k = \sum_{t=0}^{k}\lambda^t;\\
\lambda^{k+1} = \frac{S^k}{\gamma_{\bFa}};\\
\bx^{k}=\pro{\Oma}{\frac{1}{S^k}\sum_{i=0}^{k}\left[\lambda^t\by^t - \frac{\lambda^t}{\muFa}\bFa(\by^t)\right]};\\
\by^{k+1}=\pro{\Oma}{\bx^{k}-\frac{1}{\LFa}\bFa(\bx^k)}.\\
\end{array}
\end{equation}
Analogously to \eqref{eq:vector} and \eqref{eq:vector1}, we define vectors $y_{(i)}$ and $\ty_{-i}$ in respect to the matrix $\by\in\Oma$.
Hence, the distributed accelerated process can be presented by \textbf{Algorithm 2} below.

\begin{center}
	{\textbf{Algorithm 2: Acc-GRANE}}
	\smallskip
	
	\begin{tabular}{l}
		\hline
		\emph{  } Set mixing matrix $W$; $\lambda^0 = 1$;  \\
		\emph{  } All players $i\in[n]$ pick arbitrary initial $y_{(i)}^0\in\R^{n}$,\\
        \emph{  } where the $i$th coordinate of $y_{(i)}^0$ is from $\Om_i$;\\
		\emph{  } \textbf{for} $k=0,1,\ldots$, all players $i\in[n]$ do \\
        \emph{~  } $S^k = \sum_{t=0}^{k}\lambda^t$;\\
        \emph{~  } $\lambda^{k+1} = \frac{S^k}{\gamma_{\bFa}}$;\\
        \emph{~  } $x_{i}^{k}=\mathcal{P}_{\Om_i}\{\frac{1}{S^k}\sum_{t=0}^{k}[\lambda^t y_i^t -\frac{\lambda^t}{\muFa}((1-w_{ii})y_i^t$ \\
        \emph{~  }$ \qquad\qquad\qquad\qquad + \sum\limits_{j\in\N_{i}} w_{ij} \ty_{(j)i}^t - \alpha_i\nabla_i J_i(y_i^t,\ty_{-i}^t))]\}$;\\
		\emph{~~  } \textbf{for} $l=\{1,\cdots,i-1,i+1,\cdots,n\}$ \\
		\emph{  } \qquad\qquad$\tx_{(i)l}^{k}=\frac{1}{S^k}\sum_{t=0}^{k}[\lambda^t\ty_{(i)l}^{t}+\frac{\lambda^t}{\muFa}\sum\limits_{j\in\N_{i}} \lambda w_{ij} \ty_{(j)l}^t]$;\\
        \emph{~~  } \textbf{end for};\\
		\emph{~  } $y_i^{k+1}=\mathcal{P}_{\Om_i}\{x_i^k - \frac{1}{\LFa}[(1-w_{ii})x_i^k+\sum\limits_{j\in\N_{i}} w_{ij} \tx_{(j)i}^k $\\
        \emph{~  } \qquad\qquad\qquad\qquad\qquad\qquad\quad $-\alpha_i\nabla_i J_i(x_i^k,\tx_{-i}^k)]\}$;\\		
		\emph{~~  } \textbf{for} $l=\{1,\cdots,i-1,i+1,\cdots,n\}$ \\
		\emph{~~  } \qquad\qquad$y_{(i)l}^{k+1}=\tx_{(i)l}^{k}+\frac{1}{\LFa}\sum\limits_{j\in\N_{i}}w_{ij} \tx_{(j)l}^k$;\\
		\emph{~~  } \textbf{end for};\\
		\emph{~  }  Output $\tilde{\by}^k = \frac{1}{S^k}\sum_{t=0}^{k}\lambda^t \by^t$;\\		
		\emph{  } \textbf{end for}.\\
		\hline
	\end{tabular}
\end{center}
Similarly to GRANE, Acc-GRANE does not require players to have any prior information on the action sets of other players in the game. Taking into account \eqref{eq:VIforDNES}, Lemmas~\ref{lem:augmap_LC}, \ref{lem:augmap_strmon}, and Theorems~\ref{th:exist}, \ref{th:accgeom}, we obtain the following theorem.

\begin{theorem}\label{th:geom_Acc-GRANE}
Let us consider the game $\Gamma(n,\{J_i\},\{\Om_i\},\Gra)$ for which Assumptions~\ref{assum:convex}-\ref{assum:connected} hold. Then there exists the unique Nash equilibrium $x^*$ in $\Gamma$. Moreover, given $\alpha_i$, $i\in[n]$, such that $\muFa>0$, the matrix sequence of outputs $\{\tilde{\by}^k\}$ generated by the algorithm Acc-GRANE satisfies
  \[0.5\mu_{\bFa}\|\tilde{\by}^k-\bx^*\|_{\Fro}^2\le F(\by^0)\cdot\gamma_{\bFa}^2\exp\left\{-\frac{k}{\gamma_{\bFa}+1}\right\},\]
  where $\gamma_{\bFa} = \frac{\LFa}{\muFa}$, $\bx^*$ is the Nash equilibrium matrix, and
  $F(\by^0) = \sup_{\by\in \Oma}\{\langle \bFa(\by),\by^0-\by\rangle + 0.5\muFa\|\by^0-\by\|_{\Fro}^2\}.$
\end{theorem}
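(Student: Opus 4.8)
The plan is to recognize Theorem~\ref{th:geom_Acc-GRANE} as a direct instantiation of the abstract accelerated scheme of Theorem~\ref{th:accgeom} applied to the variational inequality $VI(\Oma,\bFa)$ that characterizes Nash equilibria in \eqref{eq:VIforDNES}; the argument parallels the proof of Theorem~\ref{th:geom_GRANE}, with Theorem~\ref{th:accgeom} replacing Theorem~\ref{th:geom}.

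First I would set up the abstract framework of Subsection~\ref{subsec:Nesterov}: take $E = E^* = \R^{n\times n}$ with the Frobenius inner product and the operator $B$ of \eqref{eq:Boper} equal to the identity, so that $\|\cdot\| = \|\cdot\|_{\Fro}$ and $B^{-1}=I$; take $Q = \Oma$, which is closed and convex because it is the preimage of the closed convex product set $\Om = \Om_1\times\cdots\times\Om_n$ under the linear map $\bx\mapsto\di(\bx)$; and take $g = \bFa$ as defined in \eqref{eq:augmap}. Under Assumption~\ref{assum:Lipschitz}, Lemma~\ref{lem:augmap_LC} gives that $\bFa$ is Lipschitz continuous on $\Oma$ with constant $\LFa$; under Assumptions~\ref{assum:str_monotone} and \ref{assum:connected}, together with the hypothesis that the $\alpha_i$ are chosen so that $\muFa>0$, Lemma~\ref{lem:augmap_strmon} gives that $\bFa$ is strongly monotone on $\Oma$ with constant $\muFa$, hence in particular continuous. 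By Theorem~\ref{th:existVI1}, $SOL(\Oma,\bFa)$ is then a nonempty singleton; by Proposition~\ref{prop:opt} (valid since Assumptions~\ref{assum:convex} and \ref{assum:connected} hold) this unique solution is the Nash equilibrium matrix $\bx^*$, whose diagonal equals the unique Nash equilibrium $x^*$ guaranteed by Theorem~\ref{th:exist}. The merit function $F(\by^0)=\sup_{\by\in\Oma}\{\langle\bFa(\by),\by^0-\by\rangle + 0.5\muFa\|\by^0-\by\|_{\Fro}^2\}$ is finite since strong monotonicity forces the quadratic in $\|\by-\by^0\|_{\Fro}$ inside the supremum to be bounded above.

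Second, I would verify that Acc-GRANE is exactly the row/entry-wise realization of the iteration \eqref{eq:Nest_accGR} (in the equivalent projection form stated just before the algorithm) for this $E$, $Q$, $g$. The crucial point is that projection onto $\Oma$ in the Frobenius norm separates coordinatewise: for any $M\in\R^{n\times n}$, $\pro{\Oma}{M}$ leaves every off-diagonal entry $M_{ij}$ unchanged and replaces the $i$-th diagonal entry by $\pro{\Om_i}{M_{ii}}$. Combining this with the fact that the $i$-th row of $(I-W)\bx^k$ is the linear combination $\sum_{j}(I-W)_{ij}(x_{(j)}^k)^\T$, supported on $j=i$ and $j\in\N_i$, and that the $i$-th diagonal entry of $\Lam\tbF(\bx^k)$ is $\alpha_i\nabla_i J_i(x_i^k,\tx_{-i}^k)$ (see \eqref{eq:diaggrad}), one reads off precisely the diagonal updates $x_i^k$, $y_i^{k+1}$ (carrying the projection $\pro{\Om_i}{\cdot}$) and the plain off-diagonal updates $\tx_{(i)l}^k$, $y_{(i)l}^{k+1}$, with the weight sequence $\lambda^0=1$, $S^k=\sum_{t=0}^k\lambda^t$, $\lambda^{k+1}=S^k/\gamma_{\bFa}$ generated as in \eqref{eq:Nest_accGR}. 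Hence the matrix iterates $\{\by^k\}$ and the outputs $\tilde\by^k = \frac{1}{S^k}\sum_{t=0}^k\lambda^t\by^t$ of Acc-GRANE coincide with those of the abstract accelerated method for $VI(\Oma,\bFa)$.

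Finally, I would invoke Theorem~\ref{th:accgeom} with $\mu=\muFa$, $L=\LFa$, $\gamma=\gamma_{\bFa}=\LFa/\muFa$, $x^*=\bx^*$, and $\tilde y^k=\tilde\by^k$; this immediately yields $0.5\muFa\|\tilde\by^k-\bx^*\|_{\Fro}^2 \le F(\by^0)\cdot\gamma_{\bFa}^2\exp\{-k/(\gamma_{\bFa}+1)\}$, which is the claim. I expect the main obstacle to be the second step: carefully matching the compact matrix recursion \eqref{eq:Nest_accGR} with the per-player scalar updates of Acc-GRANE — in particular justifying the componentwise split of $\pro{\Oma}{\cdot}$ and tracking the averaging coefficients $\lambda^t/S^k$, the scalings $\lambda^t/\muFa$ and $1/\LFa$, and the network-mixing terms through each matrix entry. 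Everything else follows directly from the already-established Lemmas~\ref{lem:augmap_LC} and \ref{lem:augmap_strmon}, Proposition~\ref{prop:opt}, and Theorems~\ref{th:existVI1}, \ref{th:exist}, and \ref{th:accgeom}.
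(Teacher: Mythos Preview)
Your proposal is correct and matches the paper's approach exactly: the paper does not give an explicit proof of Theorem~\ref{th:geom_Acc-GRANE} but simply states that it follows from \eqref{eq:VIforDNES}, Lemmas~\ref{lem:augmap_LC} and \ref{lem:augmap_strmon}, and Theorems~\ref{th:exist} and \ref{th:accgeom}, which is precisely the instantiation you describe. Your additional care in spelling out why $\Oma$ is closed convex, why the Frobenius projection onto $\Oma$ decouples entrywise, and why $F(\by^0)<\infty$ fills in details the paper leaves implicit, but the underlying argument is the same.
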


As it has been pointed out in Remark~\ref{rem:augm_str_mon}, the results of Theorems~\ref{th:geom_GRANE} and \ref{th:geom_Acc-GRANE} apply only to the class of games with strongly monotone mappings that additionally satisfy the condition $\mu_{\bF}>\sqrt{n-1}L_{-i}$ for all $i$, implying a restrictive structure of cost functions. To set up the GRANE for a broader class of games, we use Lemma~\ref{lem:augmap_restr} to get the following result.

\begin{theorem}\label{th:geom_GRANEr}
  Let us consider the game $\Gamma(n,\{J_i\},\{\Om_i\},\Gra)$ for which Assumptions~\ref{assum:convex}, \ref{assum:Lipschitz}, \ref{assum:connected}, and \ref{assum:restr_str_mon} hold. Let each action set $\Om_i$, $i\in[n]$, be compact. Then there exists the unique Nash equilibrium $x^*$ in $\Gamma$. Moreover, given $\alpha_i=\alpha>0$, $i\in[n]$, such that $\mura>0$ in Lemma~\ref{lem:augmap_restr}, and the stepsize $\lambda = \frac{\mura}{\LFa^2}$, the estimation matrix sequence $\{\bx^k\}$ generated by the algorithm GRANE satisfies
  \[\|\bx^k-\bx^*\|_{\Fro}^2\le \exp\left\{-\frac{k}{\gamma_{r}^2}\right\},\]
  where $\gamma_{r} = \frac{\LFa}{\mura}$ and $\bx^*$ is the Nash equilibrium matrix.
\end{theorem}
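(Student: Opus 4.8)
The plan is to re-run the standard projected-gradient contraction argument for $VI(\Oma,\bFa)$ from Section~\ref{subsec:Nesterov}, with the sole modification that strong monotonicity of the operator, required in Theorem~\ref{th:geom}, is replaced by the restricted strong monotonicity of $\bFa$ with respect to the Nash equilibrium matrix supplied by Lemma~\ref{lem:augmap_restr}; the key point is that this argument only ever compares an iterate with the solution. First I would dispose of existence and uniqueness of the Nash equilibrium. Compactness of every $\Om_i$ together with Assumption~\ref{assum:convex} gives existence by Theorem~\ref{th:exist1}, and any Nash equilibrium lies in $SOL(\Om,\bF)$ by Theorem~\ref{th:VINE}; if $x^*$ and $x^{**}$ were two of them, adding $\langle\bF(x^*),x^{**}-x^*\rangle\ge0$ and $\langle\bF(x^{**}),x^*-x^{**}\rangle\ge0$ yields $\langle\bF(x^{**})-\bF(x^*),x^{**}-x^*\rangle\le0$, which against Assumption~\ref{assum:restr_str_mon} forces $x^*=x^{**}$. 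Hence the Nash equilibrium matrix $\bx^*$ (the consensual matrix with rows $x^*$) is well defined and, by Proposition~\ref{prop:opt}, is the unique solution of $VI(\Oma,\bFa)$.

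Next I would collect the ingredients. By Lemma~\ref{lem:augmap_LC}, $\bFa$ is Lipschitz continuous on $\Oma$ with constant $\LFa$, and by Lemma~\ref{lem:augmap_restr}, with $\alpha_i=\alpha$ and $\beta$ chosen as in Remark~\ref{rem:restr} so that $\mura>0$, $\bFa$ is restricted strongly monotone over $\Oma$ with respect to $\bx^*$:
\[\langle\bFa(\bx)-\bFa(\bx^*),\bx-\bx^*\rangle\ge\mura\|\bx-\bx^*\|_\Fro^2,\qquad\forall\,\bx\in\Oma.\]
I would also observe, as already noted in the derivation of Algorithm~1, that GRANE is the row-wise form of $\bx^{k+1}=\pro{\Oma}{\bx^k-\lambda\bFa(\bx^k)}$, that all iterates stay in $\Oma$, and that $\bx^*$ is a fixed point of this map for every $\lambda>0$ by the optimality condition of the projection applied to the variational inequality $\langle\bFa(\bx^*),\bx-\bx^*\rangle\ge0$.

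The core estimate is the one-step contraction. Using non-expansiveness of $\pro{\Oma}{\cdot}$ in the Frobenius norm and the fixed-point property of $\bx^*$,
\begin{align*}
\|\bx^{k+1}-\bx^*\|_\Fro^2&\le\|\bx^k-\bx^*-\lambda(\bFa(\bx^k)-\bFa(\bx^*))\|_\Fro^2\cr
&=\|\bx^k-\bx^*\|_\Fro^2-2\lambda\langle\bFa(\bx^k)-\bFa(\bx^*),\bx^k-\bx^*\rangle+\lambda^2\|\bFa(\bx^k)-\bFa(\bx^*)\|_\Fro^2;
\end{align*}
bounding the inner product below by $\mura\|\bx^k-\bx^*\|_\Fro^2$ and the last term above by $\LFa^2\|\bx^k-\bx^*\|_\Fro^2$ gives $\|\bx^{k+1}-\bx^*\|_\Fro^2\le(1-2\lambda\mura+\lambda^2\LFa^2)\|\bx^k-\bx^*\|_\Fro^2$. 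Minimizing $1-2\lambda\mura+\lambda^2\LFa^2$ in $\lambda$ gives exactly $\lambda=\mura/\LFa^2$ and contraction factor $1-\mura^2/\LFa^2=1-\gamma_r^{-2}$ with $\gamma_r=\LFa/\mura\ge1$ (the inequality $\mura\le\LFa$ itself following from restricted strong monotonicity and Lipschitz continuity). Iterating and using $1-t\le e^{-t}$ then yields $\|\bx^k-\bx^*\|_\Fro^2\le(1-\gamma_r^{-2})^k\|\bx^0-\bx^*\|_\Fro^2\le\exp\{-k/\gamma_r^2\}\|\bx^0-\bx^*\|_\Fro^2$, i.e. the claimed bound up to the initial-distance factor normalized exactly as in Theorem~\ref{th:geom}.

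The main obstacle is not computational but a matter of bookkeeping: one cannot simply cite Theorem~\ref{th:geom}, since $\bFa$ is not strongly monotone in the sense required there, so one must verify that the projected-gradient proof needs the monotonicity inequality only at $\bx^*$; each displayed step above does indeed involve $\bx^k$ and $\bx^*$ only, which is why restricted strong monotonicity with respect to the Nash equilibrium matrix suffices. A secondary point to be careful about is that the stepsize $\lambda=\mura/\LFa^2$ and the rate $\gamma_r$ are meaningful only when $\mura>0$, which is exactly why Assumptions~\ref{assum:Lipschitz}, \ref{assum:connected}, \ref{assum:restr_str_mon} and the parameter choices of Remark~\ref{rem:restr} are invoked in the hypotheses.
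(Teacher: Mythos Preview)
Your proposal is correct and follows essentially the same approach as the paper: existence via Theorem~\ref{th:exist1} and compactness, uniqueness by summing the two VI inequalities and invoking Assumption~\ref{assum:restr_str_mon}, then the standard one-step projected-gradient contraction using non-expansiveness of $\pro{\Oma}{\cdot}$, the fixed-point property of $\bx^*$, Lipschitz continuity from Lemma~\ref{lem:augmap_LC}, and restricted strong monotonicity from Lemma~\ref{lem:augmap_restr}, optimized at $\lambda=\mura/\LFa^2$. Your explicit remark that the argument only compares iterates with $\bx^*$, so restricted (rather than full) strong monotonicity suffices, is exactly the point the paper is making by reproving the contraction instead of citing Theorem~\ref{th:geom}.
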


\begin{proof}
According to Theorem~\ref{th:exist1}, there exists a Nash equilibrium $x^*$ in the game $\Gamma(n,\{J_i\},\{\Om_i\},\Gra)$ with compact action sets and satisfying Assumptions~\ref{assum:convex}. Moreover, it can be shown that under Assumption~\ref{assum:restr_str_mon} the Nash equilibrium is unique. Indeed, due to Theorem~\ref{th:VINE}, any Nash equilibrium $x^*\in\Om$ solves $VI(\Om,\bF)$, namely for any $x\in\Om$
\[\langle \bF(x^*), x-x^* \rangle \ge 0.\]
Let us assume there exists another Nash equilibrium $y^*\in\Om$. Then, the inequality above implies
\begin{align*}
  \langle \bF(x^*), y^*-x^* \rangle  &\ge 0, \\
  \langle \bF(y^*), x^*-y^* \rangle  &\ge 0.
\end{align*}
Hence, $\langle \bF(x^*)-\bF(y^*), x^*-y^* \rangle \le 0$, which together with Assumption~\ref{assum:restr_str_mon} implies $x^* = y^*$.

According to the previous discussion, the distributed setting of the GRANE presented in \textbf{Algorithm 1} can be expressed in terms of updates of estimation matrices follows:
\begin{align*}
  \bx^0 &= \bx \in \Oma,\cr
  \bx^{k+1} &= \pro{\Oma}{\bx_k -\lambda \bFa(\bx_k)},\quad k\ge0
\end{align*}
Moreover, since the Nash equilibrium matrix $\bx^*$ solves $VI(\Oma,\bFa)$ (see Proposition~\ref{prop:opt}), we conclude that for any $\lambda>0$
\[\bx^* = \pro{\Oma}{\bx^*_k -\lambda \bFa(\bx^*_k)}\]
Thus, we get the following estimation for the distance to  $\bx^*\in\Oma$:
\begin{align}\label{eq:dist}
  \|\bx^{k+1} - \bx^*\|^2_{\Fro}& \le \|\bx_k -\lambda \bFa(\bx_k) - \bx^* + \lambda \bFa(\bx^*)\|^2_{\Fro}  \cr
  & = \|\bx_k - \bx^*\|^2_{\Fro} \cr
  &\quad+ \lambda^2\| \bFa(\bx_k) - \bFa(\bx^*)\|^2_{\Fro}\cr
  &\quad -2 \lambda\langle\bFa(\bx_k) - \bFa(\bx^*), \bx_k - \bx^*\rangle.
\end{align}
Next, according to Lemma~\ref{lem:augmap_restr} and Remark~\ref{rem:restr}, there exists $\alpha>0$ such that $\bFa$ is restricted strongly monotone in respect to $\bx^*$ with some $\mura>0$ and, hence,
\[\lambda\langle\bFa(\bx_k) - \bFa(\bx^*_k), \bx_k - \bx^*\rangle\ge\mura\|\bx_k - \bx^*\|^2_{\Fro}.\]
Thus, taking into account Lemma~\ref{lem:augmap_LC}, we obtain from \eqref{eq:dist} that
\begin{align*}
  \|\bx^{k+1} - \bx^*\|^2_{\Fro} \le (1 + \LF^2\lambda^2 - 2\mura\lambda)\|\bx_k - \bx^*\|^2_{\Fro}.
\end{align*}
Hence, under the optimal choice of $\lambda = \frac{\mura}{\LFa^2}$, we conclude that
\begin{align*}
  \|\bx^{k+1} - \bx^*\|^2_{\Fro} \le \left(1 - \frac{1}{\gamma_r^2}\right)\|\bx_k - \bx^*\|^2_{\Fro},
\end{align*}
and, thus, the result follows.
\end{proof}

\subsection{Discussion}

In this section, we study how the functional conditions (Lipschitz constants, strong convexity, etc.), the graph topology (algebraic connectivity), and the ratio parameter $\alpha$ affect the convergence speed of the algorithms.

As illustrated in the above results, to reach $\epsilon$-accuracy, GRANE requires $O(\gamma_{r}^2\log(1/\epsilon))$ iterations (Theorem~\ref{th:geom_GRANEr}) in the case of the restricted strongly monotone augmented game mapping $\bFa$ with the constant $\mu_{r,\bFa}$, while Acc-GRANE is applicable in the case of the strongly monotone augmented game mapping $\bFa$ with the constant $\mu_{\bFa}$ and needs $O(\gamma_{\bFa}\left[\log(2F(\by^0))+\log(\gamma_{\bFa}^2/\mu_{\bFa})+\log(1/\epsilon)\right])$ (Theorem~\ref{th:geom_Acc-GRANE}), where we usually consider  $\log(2F(\by^0))$ and $\log(\gamma_{\bFa}^2/\mu_{\bFa})$ to be negligible compared to $\log(1/\epsilon)$. Thus, the major factor that impacts the convergence rates/complexities is $\gamma_{r}$ and $\gamma_{\bFa}$ equal to $\frac{\LFa}{\mura}$ and $\LFa/\muFa$ respectively.

First, let us consider the constant $\gamma_{\bFa}$ defining the convergence rate of Acc-GRANE. We assume that $\Lambda=\alpha I$ and denote $H=\max_i\{L_{-i}\}$. Then the condition number can be expressed as follows:
\begin{equation}\nonumber
\begin{array}{rl}
\gamma_{\bFa}=\max&\left\{\frac{\alpha\LF+\smax{I-W}}{\lminnz{I-W}-0.5\alpha(\sqrt{\muF^2+H^2}-\muF)},\right.\\
&\left.
\frac{\alpha\LF+\smax{I-W}}{\alpha(\muF-H\sqrt{n-1})/n}\right\}.
\end{array}
\end{equation}
By restricting to a smaller class of problem where $H\leq0.5\muF/\sqrt{n-1}$, we are able to obtain that
\begin{equation}
\begin{array}{rl}
\gamma_{\bFa}\leq\max&\left\{\frac{\alpha\LF+\smax{I-W}}{\lminnz{I-W}-\alpha\muF/(16(n-1))},\right.\\
&\left.\frac{\alpha\LF+\smax{I-W}}{\alpha\muF/(2n)}\right\}.
\end{array}\label{eq:final_cond0}
\end{equation}
Note that in Lemma \ref{lem:augmap_strmon}, Remark \ref{rem:augm_str_mon}, and Theorem \ref{th:geom_Acc-GRANE}, the applicable class of games is restricted to those with $H\leq\muF/\sqrt{n-1}$. In the following, we discuss the results under different choices of $\alpha$. Let us denote $C=\frac{16(n-1)\lminnz{I-W}}{\muF}$.

If we choose $\alpha=\frac{C}{9}$, then
\begin{equation}
\begin{array}{c}
\gamma_{\bFa}\leq2n\frac{\LF}{\muF}+\frac{9}{8}\frac{\lmax{I-W}}{\lminnz{I-W}},
\end{array}\label{eq:final_cond}
\end{equation}
where $\gamma_{\bF}\triangleq\LF/\muF$ can be understood as the condition number of the game mapping $\bF$ and $\gamma_{\Gra}\triangleq\lmax{I-W}/\lminnz{I-W}$ is strongly dependent on the algebraic connectivity of the graph. When $W$ is chosen in the form of $W=I-t\Lap$ where $\Lap$ is the Laplacian matrix of the graph $\Gra$ and $t\in(0,2/\lmax{\Lap})$, we have $\gamma_{\Gra}=\lmax{\Lap}/\lminnz{\Lap}$ which coincides with the conventional condition number of the Laplacian. An unpleasant comment for \eqref{eq:final_cond} is that the number of nodes has a linear multiplicative effect on the functional condition number $\gamma_{\bF}$, which implies either the bound is not tight or the algorithm may suffer slow convergence when the number of nodes becomes large. Nonetheless an inspiring observation on \eqref{eq:final_cond} is that the condition number $\gamma_{\bFa}$ is upper bounded by the weighted sum of $\gamma_{\Gra}$ and $\gamma_{\bF}$. This is very different from what many other decentralized algorithms 
illustrate in the literature. For
example, in reference \cite{Shi2014_0} for consensus optimization employing ADMM, the analogous complexity derived under our notation would be $O((\gamma_{\Gra}^2+\gamma_{\Gra}\gamma_{\bF})\log(1/\epsilon))$ where a multiplicative coupling of $\gamma_{\Gra}$ and $\gamma_{\bF}$ is noticed. More examples of such multiplicative coupling can be found in Table II of reference \cite{li2017decentralized}.

By \eqref{eq:final_cond0}, if we choose $\alpha=rC$ where $r\in\left(0,\frac{1}{9}\right)$, then
\begin{equation}
\begin{array}{c}
\gamma_{\bFa}\leq 2n\frac{\LF}{\muF}+\frac{1}{8r}\frac{n}{n-1}\frac{\lmax{I-W}}{\lminnz{I-W}};
\end{array}\label{eq:final_cond2}
\end{equation}
If we choose $\alpha=rC$ where $r\in\left(\frac{1}{9},1\right)$, then
\begin{equation}
\begin{array}{c}
\gamma_{\bFa}\leq\frac{r}{1-r}16n\frac{\LF}{\muF}+\frac{1}{1-r}\frac{\lmax{I-W}}{\lminnz{I-W}}.
\end{array}\label{eq:final_cond3}
\end{equation}
Note that the right-hand-sides of \eqref{eq:final_cond2} and \eqref{eq:final_cond3} are both larger than the right-hand-side of \eqref{eq:final_cond}. We can see that for any absolute constant $r\in(0,1)$, we have $\gamma_{\bFa}=O(n\gamma_{\bF}+\gamma_{\Gra})$. Furthermore, too large $\alpha$ or too small $\alpha$ can both lead to poor scalability and slow convergence while a theoretically optimal $\alpha$ is provided as $\alpha=\frac{C}{9}$.

Similarly to the discussion above, we can obtain the following estimation for the constant $\gamma_{r}$ from Theorem~\ref{th:geom_GRANEr}. 
Indeed, under the choice of the parameters $\beta$ and $\alpha$ as in Remark~\ref{rem:restr}, 
\begin{equation*}
\begin{array}{c}
\gamma_{r}\leq\max\left\{\frac{\alpha\LF+\smax{I-W}}{\alpha\mu_{r,\bFa}/2n},\frac{\alpha\LF+\smax{I-W}}{\lminnz{I-W}}\right\},
\end{array}
\end{equation*}
which implies that
\begin{equation*}
\begin{array}{c}
\gamma_r \le 2n\frac{\LF}{\mu_{r,\bFa}} + \frac{\lmax{I-W}}{\lminnz{I-W}}.
\end{array}
\end{equation*}
Thus, analogously to the analysis of the constant $\gamma_{\bFa}$, we conclude that $\gamma_{r}=O(n\gamma_{r}+\gamma_{\Gra})$ and, hence, the number of players, condition number of the initial game mapping, and connectivity of the communication graph contribute the most to the convergence rate of GRANE.

\section{Simulation Results}\label{sec:sim}
Let us consider a class of games that we have discussed in Remark \ref{rem:augm_str_mon}. Specifically, we have players $\{1,2,\ldots,20\}$ and each player $i$'s objective is to minimize the cost function $J_i(x_i,x_{-i})=f_i(x_i)+l_i(x_{-i})x_i$, where $f_i(x_i)=0.5a_ix_i^2+b_ix_i$ and $l_i(x_{-i})=\sum_{j\neq i}c_{ij}x_j$. Each player $i$ is imposed with a randomly generated box constraint for its own strategy. The local cost function is in general dependent on actions of all players, but the underlying communication graph is a randomly generated tree graph. For simplicity, we choose $\alpha_i=\alpha$ for all $i$.   We randomly select $a_i$, $b_i$, and $c_{ij}$ for all possible $i$ and $j$ but manipulate the data so that $c_{ij}=-c_{ji}$ and $\mu_{\bFa}=\frac{\alpha}{n}(\min_i\{a_i\}-((n-1)\max_i\{\sum_{j\neq i}c_{ij}^2\})^{0.5})>0$. Thus, according to Theorem~\ref{th:geom_Acc-GRANE}, the accelarated gradient algorithm, namely Acc-GRANE, can be applied to learn the Nash equilibrium in the game under 
consideration. Convergence curves for GRANE and Acc-GRANE are provided in Fig.~\ref{eps:Conv}. The Nash
equilibrium $x^*$ is computed using centralized gradient play method with over $12000$ iterations. We use this $x^*$ as our reference and define the relative error as $\|\bx^k-\bx^*\|_\Fro^2/\|\bx^k-\bx^0\|_\Fro^2$. The convergence of GRANE is slow due to the small step-size and the small parameter $\alpha$. 

The small parameter $\alpha$ and specific settings for the coefficients $a_i$, $c_{ij}$, $i,j=1,\ldots, 20$, are chosen to guarantee that $\mu_{\bFa}>0$ and, thus, Acc-GRANE is applicable. To relax these assumptions on the cost functions and choose a larger $\alpha$, we refer to the results in Lemma~\ref{lem:augmap_restr} and Theorem~\ref{th:geom_GRANEr}.  These results allow us to choose $\alpha$ as in Remark~\ref{rem:restr} and to apply GRANE to the game with any $a_i$, $c_{ij}$, $i,j=1,\ldots, 20$, such that $\min_ia_i>0$. The implementation of GRANE for different settings of the cost functions and the corresponding parameter $\alpha$ are demonstrated in Fig.~\ref{eps:Conv1}. As we can see, in 
contrast to the run of GRANE applied to the strongly monotone augmented mapping $\bFa$ (see Fig.~\ref{eps:Conv}, where the relative error does not change significantly after $500$ iterations), the relative error of GRANE applied to the restricted strongly monotone augmented mapping $\bFa$ decreases with time.
%
 \vspace{0.31cm}
\begin{figure}[h]
	\begin{center}
		\includegraphics
		[width=1\linewidth]{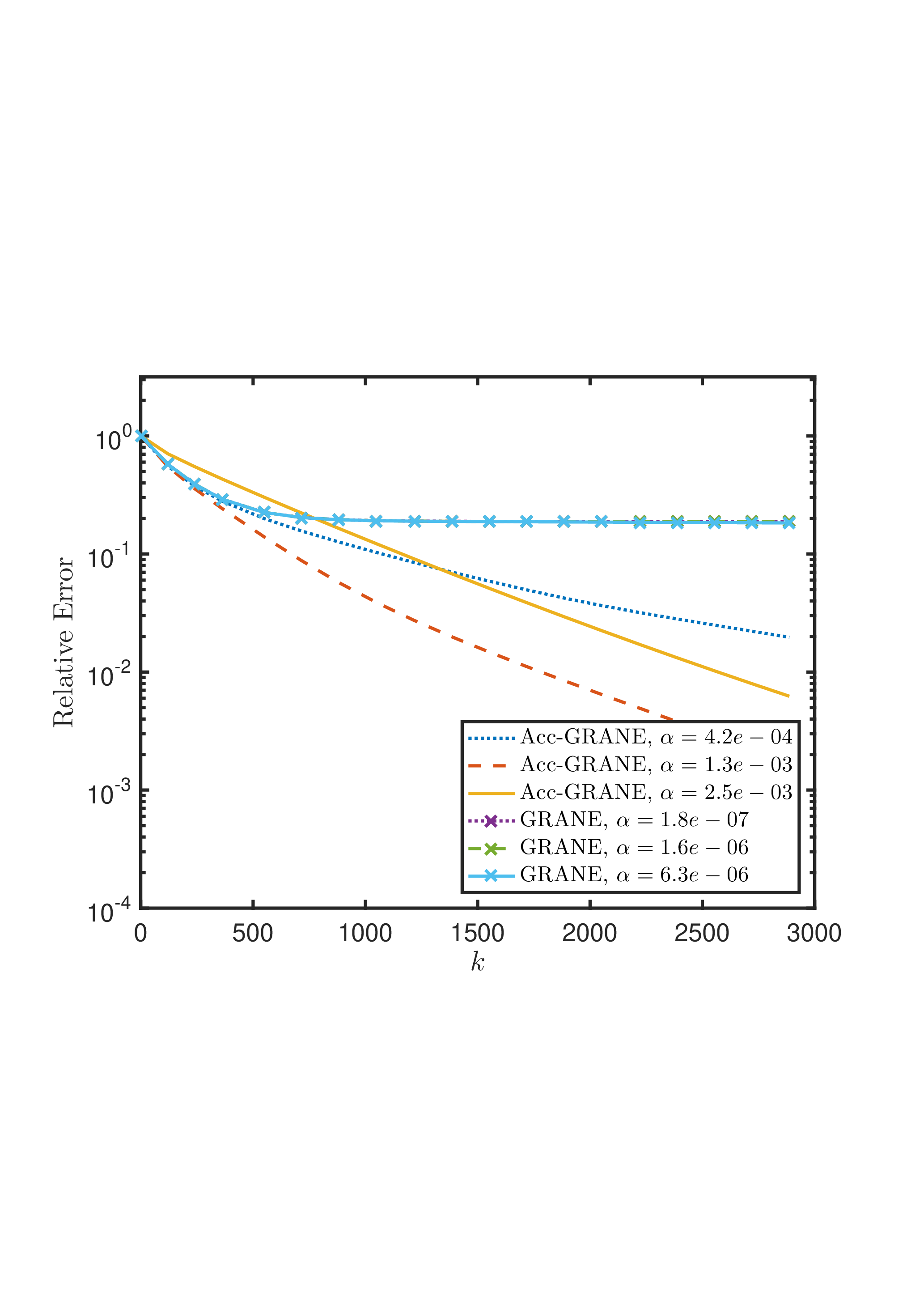}\vspace{-1em}
		\caption{Plots of the normalized residual $\|\bx^k-\bx^*\|_\Fro/\|\bx^0-\bx^*\|_\Fro$. Different $\alpha$ is set to observe the performance under different $1/\LFa$.\label{eps:Conv}}
	\end{center}
\end{figure}

\begin{figure}[ht]
	\centering
	\psfrag{0}[c][l]{\small$0$}
	\psfrag{0.5}[c][c]{\small$0.5$}
	\psfrag{0.1}[c][c]{\small$0.1$}
	\psfrag{0.01}[c][c]{\small$0.01$}
	\psfrag{k}[c][l]{\small$k$}
	\psfrag{2000}[c][c]{\small$2000$}
	\psfrag{4000}[c][c]{\small$4000$}
	\psfrag{6000}[c][c]{\small$6000$}
	\psfrag{8000}[c][c]{\small$8000$}
	\psfrag{10000}[c][c]{\small$10000$}
	\psfrag{RE}[c][l]{\small{Relative Error}}
	\psfrag{GRANE}[c][c]{GRANE}
        \includegraphics[width=1\linewidth]{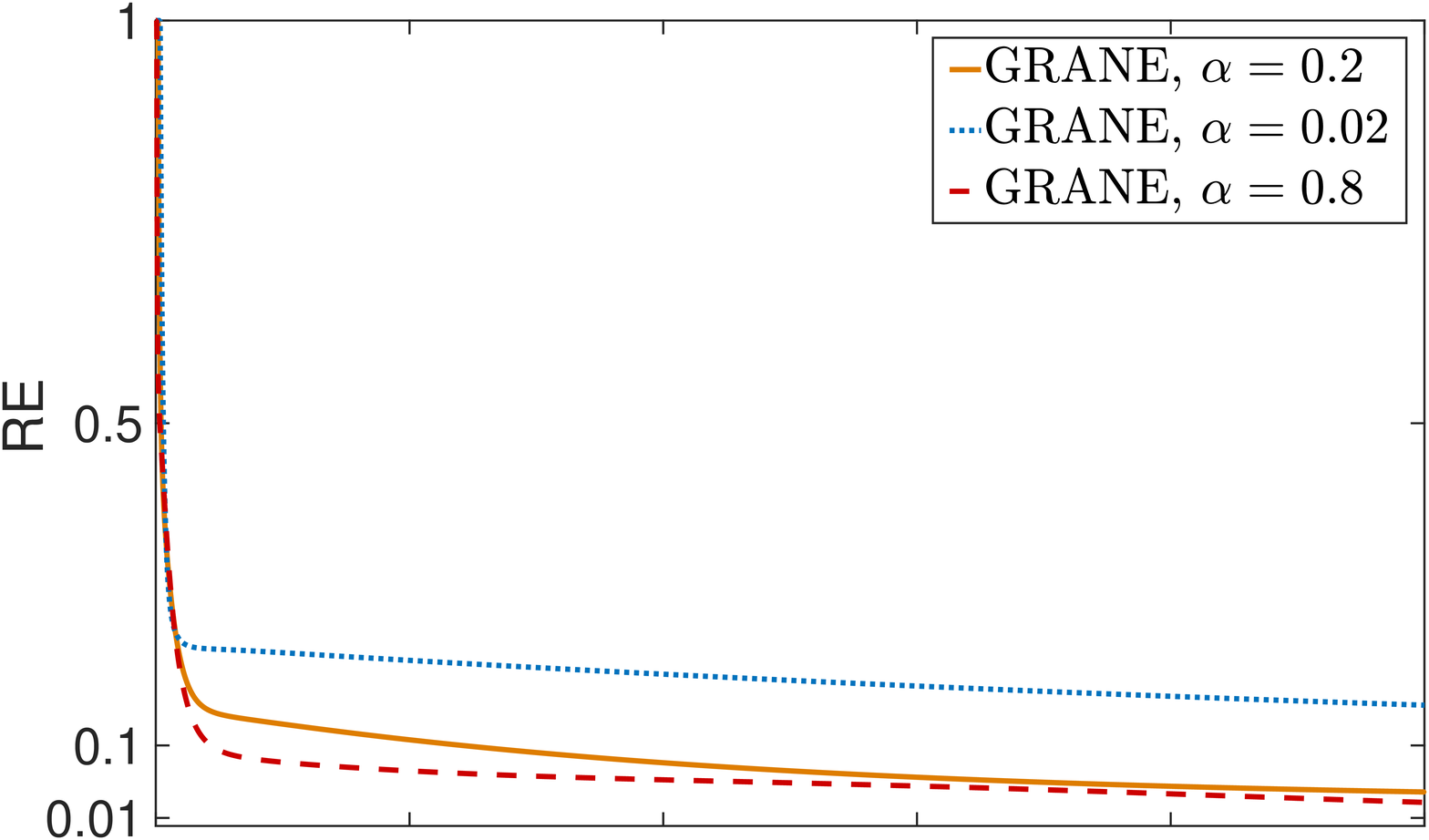}
        \caption{Plots of the normalized residual $\|\bx^k-\bx^*\|_\Fro/\|\bx^0-\bx^*\|_\Fro$. Different $\alpha$ is set to observe the performance under different $1/\LFa$.\label{eps:Conv1}}
        \end{figure}

\section{Conclusion}\label{sec:conclusion}
In this paper, we have studied a class of games with (restricted) strongly monotone mappings. We have considered the case in which players do not have the full information of joint actions but can exchange their estimates with neighbors through a decentralized communication network. To let the networked system reach the Nash equilibrium at a geometric rate, a decentralized gradient play algorithm is introduced based on the investigation of the conditions for a Nash equilibria. Moreover, by leveraging the idea of an accelerated approach for solving variational inequalities, we have obtained a version of the gradient play algorithm that guaratees a faster convergence to the Nash equilibrium, due to a better dependence on the condition number.
The presented accelerated algorithm is applicable to a restricted class of games for which the augmented game mapping is strongly monotone.
To apply the decentralized gradient play algorithm to a broader class of games, we consider the case of the restricted strongly monotone augmented game mapping and demonstrated geometric convergence of the procedure to the Nash equilibrium.
Future works can be devoted to the investigation of modifications for the accelerated algorithm to make it applicable in games without assumption on strongly monotone augmented game mappings. Another future direction is to adapt the procedures to settings that do not require knowledge of the strong monotonicity constant and the Lipschitz continuity constant of the game mapping.

\bibliographystyle{plain}
\bibliography{document}

\end{document}